\author{Julien Melleray}
\address{Universit\'e Claude Bernard -- Lyon 1 \\
  Institut Camille Jordan, CNRS UMR 5208 \\
  43 boulevard du 11 novembre 1918 \\
  69622 Villeurbanne Cedex \\
  France}
\numberwithin{equation}{section}
\title{Dynamical simplices and Borel complexity of orbit equivalence}
\begin{document}

\begin{abstract}
We prove that any divisible dynamical simplex is the set of invariant measures of some Toeplitz subshift. We apply our construction to prove that orbit equivalence of Toeplitz subshifts is Borel bireducible to the universal equivalence relation induced by a Borel action of $S_\infty$.

\end{abstract}
\maketitle

\section{Introduction}

This paper is a continuation of earlier work of the author (in part joint with T. Ibarluc\'ia) concerning sets of invariant probability measures of minimal homeomorphisms of the Cantor space, and an application of this work to the computation of the complexity, in the sense of \emph{Borel reducibility} theory, of the relation of orbit equivalence of minimal homeomorphisms (more precisely, of \emph{Toeplitz subshifts}).

Recall that a homeomorphism of a Cantor space is \emph{minimal} if all of its orbits are dense. Minimal homeomorphisms are a classical object of study in topological dynamics; let us briefly discuss an important example. Given a finite alphabet $A$, the shift map $S \colon A^\Z \to A^\Z$ is defined by setting $S(x)(n)=x(n+1)$. Of course $S$ is not minimal, but there exist minimal (in the sense of inclusion) closed $S$-invariant subsets $F$ of $A^\Z$; whenever such an $F$ is infinite it must be homeomorphic to the Cantor space, and the restriction of $S$ to $F$ is called a \emph{minimal subshift}. An even more specific example is provided by \emph{Toeplitz subshifts}, which are briefly discussed below (Section \ref{s:notations}).

Any homeomorphism $g$ of a Cantor space $X$ induces an equivalence relation $R_g$, whose classes are the $g$-orbits. Homeomorphisms with very different dynamical properties may induce the same equivalence relation, and one is led to the following notion: two homeomorphisms $g,h$ of $X$ are \emph{orbit equivalent} if there exists a homeomorphism $f$ of $X$ such that 
$$\forall x,x'\in X \quad \left(x R_g x'\right) \leftrightarrow (f(x)R_h f(x'))\ . $$
Any homeomorphism $g$ of a Cantor space must have a nonempty set of invariant Borel probability measures, which we denote by $K_g$; furthermore, a map witnessing that $g,h$ are orbit equivalent must push forward $K_g$ onto $K_h$. The fact that the converse holds for minimal homeomorphisms is much more surprising, and is the content of a celebrated theorem of Giordano, Putnam and Skau.

\begin{theorem*}[Giordano--Putnam--Skau \cite{Giordano1995}]
Let $g,h$ be two minimal homeomorphisms of a Cantor space $X$, and denote by $K_g,K_h$ their sets of invariant Borel probability measures. Then the following conditions are equivalent.
\begin{itemize}
\item The homeomorphisms $g,h$ are orbit equivalent.
\item There exists a homeomorphism $f$ of $X$ such that $f_*K_g=K_h$.
\end{itemize}
\end{theorem*}

This naturally led to an investigation of sets of invariant measures of minimal homeomorphisms, which was bolstered by the following result, itself an immediate consequence of a theorem of Glasner--Weiss.

\begin{theorem*}[see \cite{Glasner1995a}*{Lemma 2.5}]
Let $g$ be a minimal homeomorphism of a Cantor space $X$, and denote again by $K_g$ the set of $g$-invariant Borel probability measures on $X$. Assume that $A,B$ are clopen subsets of $X$ such that $\mu(A)< \mu(B)$ for any $\mu \in K$. Then there exists a clopen subset $C$ of $X$ such that $C$ is contained in $B$, and $\mu(C)=\mu(A)$ for all $\mu \in K$.
\end{theorem*}

Now we turn the problem on its head: we start from a set $K$ of probability measures on a Cantor space $X$, and ask under which conditions there exists a minimal homeomorphism $g$ of $X$ such that $K=K_g$. The above theorem of Glasner--Weiss imposes a strong, nontrivial necessary condition even in the case when $K$ is a singleton; it turns out that this condition, along with some obvious other necessary conditions, is necessary, a fact that was established by the author \cite{Melleray2018}, following joint work with T. Ibarluc\'ia in which the approach was laid out and a slightly weaker result was obtained.

\begin{theorem*}[\cite{Ibarlucia2016}*{Theorem~1.1} and \cite{Melleray2018}*{Theorem~2.1}]
Let $X$ be a Cantor space, and $K$ be a subset of the (compact) space of probability measures on $X$. Then there exists a minimal homeomorphism $g$ of $X$ such that $K=K_g$ if, and only if, the following conditions are satisfied:
\begin{enumerate}
\item $K$ is compact and convex.
\item Every element of $K$ is nonatomic and has full support.
\item $K$ satisfies the \emph{Glasner--Weiss condition}: for every clopen $A,B$ in $X$, if $\mu(A)< \mu(B)$ for every $\mu \in K$ then there exists 
a clopen $C \subset B$ such that $\mu(C)=\mu(A)$ for every $\mu \in K$.
\end{enumerate}
\end{theorem*}

In addition to the aforementioned papers, the above result was also preceded by work of Akin \cite{Akin2005}, who established the same theorem when $K$ is a singleton; and Dahl \cite{Dahl2008}, who extended this to a more general setting which applies in particular to all compact, convex sets of probability measures with finitely many extreme points. Following Dahl, we say that $K$ is a \emph{dynamical simplex} if it satisfies the conditions of the previous theorem.

The author then noticed a connection of this result to \emph{Fra\"iss\'e theory} (a quick discussion, and some references, are given in the last section of this paper, as well as in \cite{Melleray2018}) and used this to produce a new, rather elementary, proof of the following theorem, which is a particular case of a theorem of Downarowicz \cite{Downarowicz1991}.

\begin{theorem*}
Let $K$ be a nonempty metrizable Choquet simplex. There exists a minimal homeomorphism $g$ of a Cantor space $X$ such that $K$ is affinely homeomorphic to $K_g$.
\end{theorem*}

Downarowicz proved a more precise result, namely he showed that $g$ above can be taken to be a Toeplitz subshift of $\{0,1\}^\Z$. In order to obtain this result via the approach of \cite{Melleray2018}, one needs to understand which dynamical simplices are sets of invariant measures for (homeomorphisms conjugate to) Toeplitz subshifts.We establish here a simple sufficient condition; say that a dynamical simplex $K$ on a Cantor space $X$ is \emph{divisible} if for any clopen $A$ of $K$ and any integer $n$ there exists a clopen $B \subset A$ such that $\mu(B)=\frac{\mu(A)}{n}$ for all $\mu \in K$.

\begin{theorem*}
Let $X$ be a Cantor space, and $K$ a divisible dynamical simplex on $X$. Then there exists a homeomorphism $g$ of $X$ which is isomorphic to a Toeplitz subshift on $\{0,1\}^\Z$ and is such that $K=K_g$.
\end{theorem*}

This theorem, along with the arguments of \cite{Melleray2018}, enables one to realize any nonempty metrisable Choquet simplex as the set of invariant Borel probability measures of a Toeplitz subshift. An important fact for our purposes is that this construction is rather flexible, and this flexibility enables us to compute the \emph{complexity}, in the sense of \emph{Borel reducibility theory}, of the relation of orbit equivalence of Toeplitz subshifts. We say a few words about this theory before stating our result.

Let $R,S$ be two equivalence relations on standard Borel spaces $X,Y$; one says that $R$ \emph{Borel reduces} to $S$ if there exists a Borel map $f \colon X \to Y$ such that
$$\forall x,x' \in X \quad \left(x R x'\right) \Leftrightarrow \left(f(x) S f(x')\right)\ . $$
The idea is that $f$ realizes a ``computable'' embedding of $X/R$ into $Y/S$; said differently, $f$ reduces, in a computable manner, the problem of understanding when two points are $R$-equivalent (the \emph{classification problem} associated to $R$) to the problem of understanding when two points are $S$-equivalent. Given two relations $R,S$ as above, one says that they are \emph{Borel bireducible} if each Borel reduces to the other; intuitively, one then considers the complexities of the corresponding classification problems to be the same.

The theory of Borel reducibility was introduced by Friedman and Stanley in \cite{Friedman1989}, and is by now a rich and well-developed area (see for instance \cite{Hjorth2000}, \cite{Gao2009a} and references therein). An important point is that for any Polish group $G$, there exists an equivalence relation which arises from a Borel $G$-action on a standard probability space and is such that any other such equivalence relation Borel reduces to it. Clearly this relation is unique up to Borel bireducibility, and we commit an abuse of notation by calling it \emph{the} universal relation arising from a Borel action of $G$, and denote it by $E_G$. If $H$ is a closed subgroup of $G$, then $E_H$ Borel reduces to $E_G$.

A particularly important, and ubiquitous, Polish group is the group $S_\infty$ of all permutations of the integers; many natural equivalence relations happen to be Borel bireducible to $E_{S_\infty}$ (for instance, isomorphism of countable graphs, countable linear orderings, countable fields may all be seen as sitting at this particular complexity level). The specific equivalence relation that will play a role in our construction is the relation of homeomorphism between closed subsets of the Cantor space; its complexity was computed by Camerlo and Gao.

\begin{theorem*}[\cite{Camerlo2001}*{Theorem~3}]
The relation of homeomorphism between closed subsets of the Cantor space is (Borel bireducible to) the universal equivalence relation induced by a Borel action of $S_\infty$.
\end{theorem*}

It is natural to ask what exactly is the complexity of the relation of orbit equivalence of minimal homeomorphisms of the Cantor space. The theorem of Giordano--Putnam--Skau mentioned above essentially asserts that this relation is (Borel) reducible to the relation of \emph{isomorphism} of dynamical simplices, which is defined as one would expect: two dynamical simplices $K,L$ on the Cantor space $X$ are isomorphic if there exists a homeomorphism $g$ of $X$ such that $g_*K=L$. This relation is induced by a continuous action of the homeomorphism group $\Homeo(X)$, and this group is isomorphic to a closed subgroup of $S_\infty$ (think of a homeomorphism as acting by permutation on the countable set made up of all clopen subsets of $X$). Thus isomorphism of dynamical simplices sits below $E_{S_\infty}$ in terms of Borel complexity. 

Conversely, it follows from the construction of \cite{Ibarlucia2016} (improved here in order to obtain a Toeplitz subshift) and another application of the Giordano--Putnam--Skau theorem, that the relation of isomorphism of divisible dynamical simplices Borel reduces to the relation of orbit equivalence of Toeplitz subshifts. Using this approach, we establish the following result.

\begin{theorem*}
The relation of homeomorphism between closed subsets of the Cantor space Borel reduces to the relation of isomorphism of divisible dynamical simplices. Consequently, the relation of orbit equivalence of Toeplitz subshifts is (Borel bireducible to) the universal equivalence relation induced by a Borel action of $S_\infty$. 
\end{theorem*}

The paper is organized as follows. We first give more background on some of the facts mentioned in the introduction; then we indicate how one can modify the construction of \cite{Ibarlucia2016} in order to prove that a divisible dynamical simplex is the set of invariant measures of some $\{0,1\}$-Toeplitz subshift. Once that task is complete we do some bookkeeping, checking that various sets and maps are Borel, and proving that isomorphism of dynamical simplices and orbit equivalence of minimal homeomorphisms are Borel bireducible equivalence relations. Finally, we discuss the construction of \cite{Melleray2018} and explain how it can be used to produce a Borel reduction from the relation of homeomorphism of closed subsets of the Cantor space to the relation of orbit equivalence of Toeplitz subshifts.

{\bf Acknowledgements.} Work on this paper was initiated during a BIRS-CMO workshop in Oaxaca during the summer of 2017; revived while visiting IMPAN in Warsaw in Spring 2018; and a final technical issue was fixed while attending a conference at the Kurt G\"odel research center in Vienna at the end of the summer of 2018. I am grateful to the staff of all these places for their kind hospitality and the excellent working conditions they provided. Thanks are also due to Maciej Malicki and Andrew Zucker for useful conversations. Work of the author was partially supported by ANR projects GAMME (ANR-14-CE25-0004) and AGRUME (ANR-17-CE40-0026).

\section{Background}

\subsection{Notations and first definitions}\label{s:notations}
Given a Cantor space $X$, $\Clopen(X)$ denotes the countable Boolean algebra made up of all clopen subsets of $X$.  By Stone duality, a homeomorphism of $X$ corresponds uniquely to an automorphism of $\Clopen(X)$, and conversely any such automorphism is associated to a unique homeomorphism of $X$.

The space of Borel probability measures on $X$, which we denote by $\Prob(X)$, has a natural compact topology induced by the maps $\mu \mapsto \mu(U)$, where $U$ ranges over all clopen subsets of $X$; it is worth keeping in mind that a Borel probability measure on $X$ is uniquely determined by its values on clopen sets, and that any finitely additive probability measure on $\Clopen(X)$ extends to an element of $\Prob(X)$.

The group $\Homeo(X)$ of all homeomorphisms of $X$ is a Polish group, and for our purposes its topology is best understood by thinking of homeomorphisms as automorphisms of $\Clopen(X)$; a basis of neighborhoods of identity is made up of sets of the form
$\{g \in \Homeo(X) \colon \forall i \in \{0,\ldots,n\} \ g(U_i)=U_i\} $
where $(U_0,\ldots,U_n)$ ranges over all finite families of clopen subsets of $X$. 

We recall that a homeomorphism of $X$ is \emph{minimal} if each of its orbits is dense.

Let $A$ be a finite alphabet, and $X_A=A^\Z$. Then one may consider the shift map $S \colon X_A \to X_A$ defined by $S(x)(n)= x(n+1)$ for all $x \in X_A$ and all $n \in \Z$. This is a homeomorphism, which is clearly not minimal; a \emph{minimal subshift} is an infinite, closed, $S$-invariant subset of $X_A$ on which $S$ acts minimally. Those always exist, and of particular relevance to us are the \emph{Toeplitz subshifts}.

\begin{defn}
Let $A$ be a finite alphabet. A sequence $x \in A^\Z$ is \emph{quasiperiodic} if for all $m \in \Z$ there exists $p \ne 0$ such that $x(m)=x(m+np)$ for all $n \in \Z$ (equivalently, every finite subword of $x$ occurs periodically in $x$). The sequence is \emph{Toeplitz} if it is quasiperiodic but not periodic.

If $x$ is Toeplitz, the associated \emph{Toeplitz subshift} is the closure of the orbit of $x$ in $X_A$ under the shift action. 
\end{defn}

Note for future use that $x$ is quasiperiodic if and only if for any neighborhood $U$ of $x$ there exists $p>0$ such that $S^{np}(x) \in U$ for all $n \in \Z$ (this is the topological reformulation of the fact that each subword occurs periodically; of course the period depends on the subword).
It is not hard to check that any Toeplitz subshift is minimal, though one has to pay attention to the fact that not every element of a Toeplitz subshift is a Toeplitz sequence. For more information on Toeplitz subshifts we refer to \cite{Williams1984}.

\subsection{Kakutani--Rokhlin partitions}
Our approach to minimal homeomorphisms is via \emph{Kakutani--Rokhlin partitions}, and we review the basics now. In this subsection, we fix a minimal homeomorphism $\varphi$ and a Cantor space $X$. First, notice that for any nonempty open set $O$ one must have 
$X=\bigcup_{n \in \Z}  \varphi^n O$ by minimality, so by compactness there exists $N$ such that 
$$X=\bigcup_{n=-N}^{-1} \varphi^n O \ .$$
Thus one may define the \emph{first return map} associated to $O$: for any $x \in O$, let $n(x)=\min\{n \ge 1 \colon \varphi^n(x) \in O\}$ and set $\varphi_O(x)=\varphi^{n(x)}(x)$. When $O$ is assumed to be clopen, the map $n(x)$ is continuous, and $\varphi_O$ is easily checked to be a homeomorphism of $O$. Let $\{n(x) \colon x \in O\}$ be enumerated as $n_1,\ldots,n_k$, and for every $i$ set $O_i=\{x \in O \colon n(x)=n_i\}$. Then for all $i$ and all $j < n_i$ define $O_{i,j}= \varphi^j(i)$. Then the following conditions are satisfied:
\begin{enumerate}
\item The family $\mcO=(O_{i,j})$ forms a clopen partition of $X$.
\item For every $i$ and every $j<n_i$ one has $\varphi(O_{i,j})=O_{i,j+1}$. 
\end{enumerate}
These conditions form the definition of a Kakutani--Rokhlin partition associated to $\varphi$; the \emph{base} of the partition is the set $\bigcup_{i} O_{i,0}$, while its \emph{top} is $\bigcup_{i} O_{i,n_i}$. by a \emph{column} of a Kakutani--Rokhlin partition $\mcO$, we mean a set of the form $\{O_{i,j} \colon 0 \le j \le n_i\}$. We say that $n_i+1$ is the \emph{height} of that column. 

\begin{defn}
Let $\mcO$ and $\mcU$ be two Kakutani--Rokhlin partitions. We say that $\mcU$ \emph{refines} $\mcO$ if the base of $\mcU$ is contained in the base of $\mcO$, and every element of $\mcO$ is a union of elements of $\mcU$.
\end{defn}

Whenever $\mcU$ refines $\mcO$, $\mcU$ has been obtained from $\mcO$ by \emph{cutting and stacking}; that is, the base of $\mcU$ is endowed with a partition finer than that induced from $\mcO$, and the columns of $\mcU$ are obtained by stacking small slices of the columns of $\mcO$ on top of each other. Indeed, any element of $\mcU$ is contained in a unique element $O(U)$ of $\mcO$ and, if $O(U)$ does not belong to the top, it is mapped by $\varphi$ one level up in $\mcO$, so that $\varphi(U)$ is contained in $\varphi(O(U))$. However, when $O(U)$ belongs to the top of $\mcO$, there is no way to guess from $\mcO$ where $O(U)$ will be mapped to.

A particularly useful fact is the possibility of cutting along the columns: let $\mcO$ be a Kakutani--Rokhlin partition, and $A$ a clopen subset of $X$. For any $i$ and $j \le n_i$, let $U_{i,k}$ enumerate the atoms of the partition generated by the clopen sets $\varphi^{-j}(A \cap U_{i,j})$ and $\varphi^{-j}(U_{i,j} \setminus A)$. Then one can form a new Kakutani--Rokhlin partition, with the same base, with columns enumerated by all $U_{i,k}$ and levels $U_{i,k,j}=\varphi^j(U_{i,k})$; each column of $\mcO$ has been cut vertically to produce new, thinner columns with the same height as the original one. After this operation has been performed, the new Kakutani--Rokhlin partition $\mcU$ is \emph{compatible} with $A$, that is, $A$ is a union of elements of $\mcU$.

Given a Kakutani--Rokhlin partition $\mcO$, let $\mcO_{base}$ denote the subalgebra of $\Clopen(X)$ with atoms $\{O_{i,j} \colon 0 <j \le n_i\} \cup \{\text{base}(\mcO)\}$, and $\mcO_{top}$ the subalgebra with atoms $\{O_{i,j} \colon 0 \le j < n_i\} \cup \{\text{top}(\mcO)\}$. Then $\varphi$ induces a partial automorphism $\varphi_\mcO \colon \mcO_{top} \to \mcO_{base}$ of the Boolean algebra $\Clopen(X)$, such that $\varphi(O_{i,j})=O_{i,j+1}$ whenever $j< n_i$, and $\varphi(\text{top}(\mcO))=\text{base}(\mcO)$. In turn, this partial automorphism defines an open neighborhood $U_\mcO$ of $\varphi$ in $\Homeo(X)$, which consists of all homeomorphisms of $X$ which extend $\varphi_\mcO$. 

Now, fix $x \in X$, and consider a sequence of Kakutani--Rokhlin partitions $(\mcO_n)$ such that 
\begin{enumerate}
\item $\mcO_{n+1}$ refines $\mcO_n$ for all $n$.
\item $(\mcO_n)$ generates $\Clopen(X)$ (i.e. any clopen set is a union of elements of some $\mcO_n$).
\item The intersection of the bases of $\mcO_n$ is equal to $\{x\}$ (and then the tops must intersect to $\varphi^{-1}(x)$).
\end{enumerate}
The fact that these conditions can be satisfied is a simple consequence of the countability of $\Clopen(X)$ and the possibility of cutting columns as explained above. Under these conditions, the intersection of the open neighborhoods $U_{\mcO_n}$ is equal to $\{\varphi\}$; that is, knowing the behavior of $\varphi$ on a sufficiently rich sequence of Kakutani--Rokhlin partitions is sufficient to reconstruct $\varphi$ entirely (essentially, these partitions encode a neighborhood basis of $\varphi$).

\subsection{Invariant measures of minimal homeomorphisms and a theorem of Glasner--Weiss}

Given a minimal homeomorphism $\varphi$ of the Cantor space $X$, we denote $K_\varphi=\{ \mu \in \Prob(X) \colon \varphi_* \mu = \mu\}$, and 
$G_\varphi= \{g \in \Homeo(X) \colon \forall \mu \in K_\varphi \ g_* \mu= \mu\}$. By definition $\varphi$ belongs to $G_\varphi$, and $G_\varphi$ is a subgroup of $\Homeo(X)$; as it turns out, it follows from a result of Glasner--Weiss that $G_\varphi$ is the closure of the \emph{full group} of $\varphi$, see \cite{Glasner1995a}, \cite{Ibarlucia2016}. We state two consequences of that result that will be useful to us.

\begin{theorem}[Glasner--Weiss \cite{Glasner1995a}*{Lemma~2.5 and Proposition~2.6}]\label{t:glasnerweiss}
Let $\varphi$ be a minimal homeomorphism of the Cantor space $X$, and $A,B$ two clopen subsets of $X$. 
\begin{enumerate}
\item Assume that $\mu(A)< \mu(B)$ for all $\mu \in K_\varphi$. Then there exists $g \in G_\varphi$ such that $g(A) \subset B$.
\item Assume that $\mu(A)=\mu(B)$ for all $\mu \in K_\varphi$. Then there exists $g \in G_\varphi$ such that $g(A)=B$.
\end{enumerate}
\end{theorem}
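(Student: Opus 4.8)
The plan is to prove item (2) and then derive item (1) from it. For (1): assuming $\mu(A) < \mu(B)$ for every $\mu \in K_\varphi$, the Glasner--Weiss lemma recalled in the introduction (\cite{Glasner1995a}*{Lemma~2.5}) produces a clopen set $C \subseteq B$ with $\mu(C) = \mu(A)$ for all $\mu \in K_\varphi$; applying (2) to the pair $(A,C)$ yields $g \in G_\varphi$ with $g(A) = C \subseteq B$. So everything reduces to (2).

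To prove (2), I would construct $g$ directly, by a back-and-forth argument carried out on the Boolean algebra $\Clopen(X)$ rather than on $X$. Fix an enumeration $\{U_k : k \geq 1\}$ of $\Clopen(X)$ and build an increasing chain of finite subalgebras $\mathcal{A}_n, \mathcal{B}_n \subseteq \Clopen(X)$ together with Boolean isomorphisms $\theta_n \colon \mathcal{A}_n \to \mathcal{B}_n$ such that $A \in \mathcal{A}_0$ and $\theta_0(A) = B$; $\theta_{n+1}$ extends $\theta_n$; $U_k \in \mathcal{A}_{2k-1}$ and $U_k \in \mathcal{B}_{2k}$ for every $k$ (the usual bookkeeping); and --- the decisive requirement --- each $\theta_n$ is measure preserving, meaning $\mu(\theta_n(E)) = \mu(E)$ for every $\mu \in K_\varphi$ and every $E \in \mathcal{A}_n$. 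The base case is available because $\mu(A) = \mu(B)$, hence $\mu(X \setminus A) = \mu(X \setminus B)$, for all $\mu \in K_\varphi$ (the degenerate cases $A \in \{\emptyset, X\}$ being trivial). The limit $\theta = \bigcup_n \theta_n$ is then an automorphism of $\Clopen(X)$ with $\theta(A) = B$ which preserves the clopen values of every $\mu \in K_\varphi$ --- hence preserves $\mu$ itself, a Borel measure on $X$ being determined by its values on clopen sets. By Stone duality $\theta$ is induced by a homeomorphism $g$ of $X$, and then $g(A) = B$ and $g \in G_\varphi$, as wanted.

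The content of the argument is entirely in the extension step, and this is where the Glasner--Weiss lemma enters again. Suppose $\theta_n \colon \mathcal{A}_n \to \mathcal{B}_n$ is measure preserving and we must adjoin a clopen set $U$ to the domain: each atom $E$ of $\mathcal{A}_n$ splits into $E \cap U$ and $E \setminus U$, and we must split its image $\theta_n(E)$ into two clopen pieces of measures $\mu(E \cap U)$ and $\mu(E \setminus U)$, simultaneously for all $\mu \in K_\varphi$. Using that every $\mu \in K_\varphi$ has full support (the support of an invariant measure is a nonempty closed invariant set, hence all of $X$ by minimality), one checks that either $E \subseteq U$, or $E \cap U = \emptyset$ --- in which cases the atom is not split and there is nothing to do --- or else $\emptyset \neq E \cap U \subsetneq E$, and then $\mu(E \cap U) < \mu(E) = \mu(\theta_n(E))$ for every $\mu \in K_\varphi$. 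In that last case the Glasner--Weiss lemma, applied to the pair $(E \cap U, \theta_n(E))$, furnishes a clopen $F \subseteq \theta_n(E)$ with $\mu(F) = \mu(E \cap U)$ for all $\mu \in K_\varphi$, and then $\theta_n(E) \setminus F$ has measure $\mu(E \setminus U)$ for all $\mu$; so setting $\theta_{n+1}(E \cap U) = F$ and $\theta_{n+1}(E \setminus U) = \theta_n(E) \setminus F$ extends $\theta_n$ as required. Adjoining a clopen set to the range is symmetric, working through $\theta_n^{-1}$.

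I expect the extension step above to be the only genuine obstacle; once it is in hand, the rest is routine back-and-forth bookkeeping, and it is precisely the hypothesis that $K_\varphi$ satisfies the Glasner--Weiss condition (together with full support, which is automatic here) that makes it work. One could instead invoke the description of $G_\varphi$ as the closure of the full group of $\varphi$, but the construction above is self-contained and uses only \cite{Glasner1995a}*{Lemma~2.5}, which has already been recalled.
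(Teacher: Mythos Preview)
The paper does not give its own proof of this theorem: it is stated with the attribution \cite{Glasner1995a}*{Lemma~2.5 and Proposition~2.6} and then used as a blackbox, so there is nothing in the paper to compare your argument against.

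That said, your argument is correct and self-contained modulo the clopen-set version of Lemma~2.5 already recalled in the introduction. The back-and-forth step is sound: full support guarantees that a nontrivial split $E = (E\cap U)\sqcup(E\setminus U)$ has both pieces of strictly positive measure for every $\mu\in K_\varphi$, so Lemma~2.5 applies to $(E\cap U,\theta_n(E))$, and the limit $\theta$ is a measure-preserving automorphism of $\Clopen(X)$ by construction. Deriving (1) from (2) via Lemma~2.5 is also fine.

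For context, the original argument in \cite{Glasner1995a} is different in spirit: Glasner and Weiss work inside the topological full group of $\varphi$, building $g$ explicitly from pieces of the form $\varphi^k$ on clopen sets by exploiting Kakutani--Rokhlin towers and a uniform ergodic-type estimate. Their $g$ therefore lies in the full group of $\varphi$, not merely in $G_\varphi$. Your approach uses only the abstract Glasner--Weiss property and full support, which is exactly the point of view adopted later in \cite{Ibarlucia2016} and in this paper when working with an arbitrary dynamical simplex $K$ rather than $K_\varphi$. So your route is the ``right'' one from the perspective of this paper; the price is that the $g$ you produce is only known to lie in $G_\varphi$, which is all the statement asks for.
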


\section{Divisible dynamical simplices and Toeplitz subshifts}

\begin{defn}
Let $X$ be a Cantor space, and $K$ a subset of $\Prob(X)$. We say that $K$ is a \emph{dynamical simplex} if $K$ satisfies the following conditions:
\begin{enumerate}
\item $K$ is nonempty, compact and convex.
\item All elements of $K$ are atomless and have full support.
\item For any clopen $U,V \in K$ such that $\mu(U)< \mu(V)$ for all $\mu \in K$, there exists a clopen $W \subset V$ such that $\mu(U)=\mu(W)$ for all $\mu \in K$ (We then say that $K$ has the \emph{Glasner--Weiss property}).
\end{enumerate}
\end{defn}

We say that a subset $K$ of $\Prob(X)$ is \emph{divisible} if it is true that, for any clopen $U$ and any $n \in \N^*$, there exists a clopen $V \subseteq U$ such that $\mu(V)=\frac{1}{n} \mu(U)$ for all $\mu \in K$. Note that not all dynamical simplices are divisible (though they all satisfy a weaker, closely related condition, see \cite{Melleray2018}*{Theorem~2.1}). 

It was proved in \cite{Melleray2018} (following \cite{Ibarlucia2016}, itself continuing and extending earlier work of Akin \cite{Akin2005} and Dahl \cite{Dahl2008}) that $K$ is a dynamical simplex if and only if there exists a minimal homeomorphism $\varphi$ such that $K$ is equal to the set of all $\varphi$-invariant probability measures; this explains the terminology ``dynamical simplex'', which was introduced by Dahl. Our aim in this section is to prove the following variant of that result.

\begin{theorem}\label{t:Toeplitz}
Assume that $K$ is a \emph{divisible} dynamical simplex. Then there exists a $\{0,1\}$ Toeplitz subshift $\varphi$ such that 
$$K=\{\mu \in \Prob(X) \colon \varphi_* \mu=\mu\}\ . $$
\end{theorem}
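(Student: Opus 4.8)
The plan is to construct $\varphi$ as an inverse limit of Kakutani--Rokhlin partitions, but with two extra constraints on the partitions that force the resulting homeomorphism to be (conjugate to) a $\{0,1\}$-Toeplitz subshift. The starting point is the known realization result: since $K$ is a dynamical simplex, by \cite{Melleray2018} there is a minimal homeomorphism $\psi$ of $X$ with $K_\psi = K$, and we can fix a refining sequence of Kakutani--Rokhlin partitions $(\mcO_n)$ generating $\Clopen(X)$ whose bases shrink to a point, so that $\psi$ is determined by the partial automorphisms $\psi_{\mcO_n}$. I would not try to keep $\psi$; instead I would rebuild the homeomorphism level by level, using Theorem~\ref{t:glasnerweiss} and divisibility to arrange, at stage $n$, that the partition $\mcO_n$ has the following two additional features: (a) all columns have a common height $h_n$, with $h_{n+1}$ a multiple of $h_n$ (this is where divisibility is used: to cut $X$ into $h_n$ clopen pieces of equal measure for every $\mu \in K$, and more generally to subdivide each column uniformly so the heights can be equalized); and (b) the base of $\mcO_{n+1}$ — equivalently, the ``return word structure'' — is chosen so that the coding of points by which column of $\mcO_n$ they pass through at time $0$ stabilizes periodically. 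The Boolean-algebra/Glasner--Weiss machinery recalled in the excerpt (the neighborhoods $U_{\mcO}$, compatibility with a clopen set, cutting along columns) is exactly what lets one perform these adjustments while keeping control of $K$: any two clopen sets with equal $K$-measure can be matched by an element of $G_\varphi$, so rearranging columns never changes the invariant measures.

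Concretely, feature (a) lets us define, for each $n$, a factor map from $X$ onto $\Z/h_n\Z$ (namely $x \mapsto$ the level of the $\mcO_n$-column containing $x$), and these are compatible as $n$ grows, yielding a factor map onto the odometer $\varprojlim \Z/h_n\Z$. A minimal system with such an odometer factor, together with the requirement that the factor map be \emph{almost one-to-one} (injective on a dense $G_\delta$ — which one arranges by making the bases shrink to a point and making the columns ``thin enough'' at each stage), is precisely the dynamical characterization of a Toeplitz subshift. So the second main task is to exhibit a $\{0,1\}$-valued Toeplitz sequence realizing this system: choose a clopen set $A \subseteq X$ that is a union of $\mcO_n$-atoms for appropriate $n$, such that the itinerary map $x \mapsto (\mathbf{1}_A(\varphi^k x))_{k \in \Z} \in \{0,1\}^\Z$ is injective (separating points — possible since the $\mcO_n$ generate $\Clopen(X)$ and one can pick $A$ to cut each column), and verify that for $x$ in the base-point's fibre this itinerary is quasiperiodic but not periodic. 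Quasiperiodicity comes from feature (b) (every finite window of the itinerary is determined by which $\mcO_n$-column-level one is in, and that recurs with period $h_n$), and non-periodicity comes from the $\mcO_n$ genuinely refining (the system is infinite, not a single odometer, because $K$ is nonatomic with full support so $X$ is not just the odometer). Then $\varphi$ is conjugate, via the itinerary map, to the shift on the orbit closure of that Toeplitz sequence, and $K_\varphi = K$ by construction.

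The main obstacle is doing (a) and (b) \emph{simultaneously} at every stage without breaking the inductive control on $K$: equalizing column heights forces one to cut each column into the same number of uniform-measure slices, which requires divisibility by the least common multiple of the current heights, and then the fresh cuts one makes to guarantee separation (feature towards the itinerary map being injective) and to guarantee the base shrinks must themselves be redistributed uniformly so the next common height $h_{n+1}$ exists. Each individual move is licensed by Theorem~\ref{t:glasnerweiss} and divisibility, but one has to interleave them carefully — typically: first refine to generate more of $\Clopen(X)$ and shrink the base, then cut along columns to be compatible with the new clopen sets, then use divisibility to uniformly subdivide and re-stack so that heights are equalized and $h_n \mid h_{n+1}$ — and then check that the limiting partial automorphisms still patch together to a single homeomorphism $\varphi$ with $G_\varphi$-orbits not disturbing $K$. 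The verification that the resulting itinerary sequence is genuinely Toeplitz (quasiperiodic, non-periodic) rather than merely that the system has an odometer factor is the part where the combinatorics of the construction must be pinned down precisely, and I expect that to be the technical heart of the argument; the Borel-reducibility applications afterwards will hinge on this construction being uniform enough in $K$, but that is a separate concern addressed later in the paper.
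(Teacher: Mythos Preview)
Your overall shape is right and matches the paper: build a refining sequence of KR-partitions with bases shrinking to a point, use divisibility to force all columns at each level to have a common height $h_n$, and check that the base point codes to a quasiperiodic, non-periodic sequence. The paper does exactly this, and your odometer-factor reformulation is equivalent to the paper's direct verification that $\varphi^{h_n p}$ fixes the base for all $p$.

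The genuine gap is the step you flag as ``the second main task'' and then wave away. You write: ``choose a clopen set $A$ \ldots\ such that the itinerary map is injective (separating points --- possible since the $\mcO_n$ generate $\Clopen(X)$ and one can pick $A$ to cut each column)''. This does not follow. That the $\mcO_n$ generate $\Clopen(X)$ says nothing about whether a \emph{single} two-set clopen partition is $\varphi$-generating; an odometer itself is a minimal Cantor system built from refining KR-partitions generating the clopen algebra, and no clopen $A$ gives an injective $\{0,1\}$-coding there. So you cannot choose $A$ after the fact: the injectivity of the $\{0,1\}$-coding must be built into the tower construction, and you have not said how.

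The paper's mechanism for this is the point you are missing. It fixes the two-atom partition $\{A,B\}$ as $\mcT_0$ from the start, and then imposes at every level the combinatorial condition that \emph{no two columns of $\mcT_{n+1}$ are $\mcT_n$-twins} (same proportions of each $\mcT_n$-column), together with a fixed left-ordering of the copies inside each column. A short argument then shows that if $x\ne y$ land in the same $\mcT_0$-atom along the entire orbit, one can descend to a level where they sit in different columns with the same repartition, contradicting the no-twins condition; hence the $\{A,B\}$-itinerary separates points. Arranging ``no twins'' while simultaneously equalizing heights, shrinking the base, and keeping the measure-matching property is the content of the paper's refinement Lemma~\ref{l:refinement} and its two corollaries, and this is where the real work lies --- not in the quasiperiodicity, which (as you correctly say) is immediate from equal heights.
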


Since the dynamical simplices built in \cite{Melleray2018} are divisible, this result, combined with \cite{Melleray2018}, recovers Downarowicz's result that any nonempty metrizable Choquet simplex is affinely homeomorphic to the space of invariant measures of a $\{0,1\}$ Toeplitz subshift.

In order to prove Theorem \ref{t:Toeplitz}, we need to refine the argument of \cite{Ibarlucia2016}, using the fact that $K$ is divisible to ensure some additional conditions. We first recall some notions and notations.

\begin{defn}
Let $K$ be a dynamical simplex.  Given $U,V \in \Clopen(X)$, we denote $U\sim_K V$ when it is true that
$$\forall \mu \in K \quad \mu(U)=\mu(V) \ .$$
In that case, we often write that $U$ and $V$ have equal measures.
\end{defn}

Denote by $G_K$ the group $\{g \in \Homeo(X) \colon \forall \mu \in K \ g_*\mu=\mu\}$. Assuming that $K$ is a dynamical simplex, it follows from Theorem \ref{t:glasnerweiss} that $U \sim_K V$ holds if and only if there exists $g \in G_K$ such that $g(U)=V$.

\begin{defn}[\cite{Ibarlucia2016}]
Let $K$ be a dynamical simplex, and $U \in \Clopen(X)$. A \emph{KR-partition} (associated to $K$) of $U$ is a partition of $U$ in clopen subsets $U_{i,j}$, where $i \in \{1,\ldots,n\}$ and $j \in \{0,\ldots,n_{i-1}\}$ such that 
$$\forall i \ \forall j,k \in \{0,\ldots,n_i\} \quad U_{i,j} \sim_K U_{i,k}\ . $$
The union of all $U_{i,j}$ for a fixed $i$ is called a \emph{column} of this partition, and $n_i+1$ is the \emph{height} of that column.
\end{defn}

This definition is modelled on that of a Kakutani--Rokhlin partition, except that there is no named homeomorphism mapping one level of a column to the next. There is an obvious notion of refinement for KR-partitions, introduced in \cite{Ibarlucia2016}, and we again use the terminology of \emph{cutting} and \emph{stacking}. If $\mcT$ refines $\mcS$, the columns of $\mcT$ consist of small slices of columns of $\mcS$ stacked onto each other; we use the terminology \emph{copies} to describre these slices. For instance, given any column $C$ of $\mcS$, we will often mention the copies of $C$ contained in a given column of $\mcT$.

To each KR-partition a natural partial automorphism is associated, which maps each $U_{i,j}$ to $U_{i,j+1}$ for $j < n_i$, and maps $\bigcup_i U_{i,j_i}$ (the \emph{top} of the partition) to $\bigcup U_{i,0}$ (the \emph{base} of the partition). 
Associated to any KR-partition, there is an open subset in $\Homeo(X)$, made up of all homeomorphisms extending the partial automorphism associated to the KR-partition at hand. A sequence of partitions refining each other thus induces a nested sequence of open subsets which, under appropriate conditions, intersect in a singleton consisting of a minimal homeomorphism. This is how the construction of \cite{Ibarlucia2016} proceeds; we use the same basic idea here, but need to ensure some additional conditions to obtain a Toeplitz subshift in the end. This additional work is based on a lemma which we discuss now; most of the work is done to ensure that there exists a finite (indeed, with $2$ elements) clopen generating partition  
for the homeomorphism obtained at the end of the construction.

\subsection{A refinement lemma}
In this subsection, we fix a divisible dynamical simplex $K$ and a $KR$-partition $\mcT$; we denote the columns of $\mcT$ by $C_1,\ldots,C_n$ and assume that $n \ge 2$. 

\begin{defn}
Let $\mcS$ be a KR-partition refining $\mcT$. Given a column $C$ of $\mcS$, and $i \in \{1,\ldots,n\}$, denote by $k_i$ the number of copies of $C_i$ contained in $C$; the \emph{repartition} of $C$ is the vector $(\frac{k_1}{\text{height}(C)},\ldots,\frac{k_n}{\text{height}(C)})$

Two columns of $\mcS$ are said to be $\mcT$-\emph{twins} if they have the same repartition.
\end{defn}

\begin{lemma}\label{l:refinement}
Assume that $\mcS$ is a KR-partition refining $\mcT$, and that at least two columns of $\mcS$ are not $\mcT$-twins. Then one may further refine $\mcS$ to a KR-partition $\mcS'$ such that:
\begin{enumerate}
\item Each column of $\mcS'$ contains at least one copy of each column of $\mcT$.
\item No two columns of $\mcS'$ are $\mcT$-twins. 
\item All the columns of $\mcS'$ have the same height.
\end{enumerate}
\end{lemma}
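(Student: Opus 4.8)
Write $D_1,\dots,D_p$ for the columns of $\mcS$; the hypothesis forces $p\ge 2$. For each $a$ let $h_a$ be the height of $D_a$, let $\kappa^a\in\N^n$ be the vector whose $i$-th coordinate is the number of copies of $C_i$ inside $D_a$, and let $w_a$ be the (common) measure of the levels of $D_a$, regarded as a function on $K$. Since $\mcS$ refines $\mcT$, each column $C_i$ of $\mcT$ occurs as a copy in at least one column of $\mcS$, so $\kappa_\Sigma:=\sum_a\kappa^a$ has all coordinates $\ge 1$; and since not all columns of $\mcS$ are $\mcT$-twins, the vectors $\kappa^a$ are not all proportional, and in particular there is an index $b$ with $\kappa^b$ not proportional to $\kappa_\Sigma$. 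Note also that two columns built by stacking copies of the $D_a$ are $\mcT$-twins precisely when the associated content vectors $\sum_a(\#\text{copies of }D_a)\,\kappa^a$ are proportional (the height of such a column is recovered from this vector by pairing it with the height vector of $\mcT$).

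The plan is to obtain $\mcS'$ by two successive refinements. First I build a refinement $\mcS_1$ of $\mcS$ whose columns $V_1,\dots,V_p$ are specified combinatorially: $V_s$ is to be a concatenation of copies of $D_1,\dots,D_p$ containing exactly $c_{s,a}\ge 1$ copies of $D_a$. Because $c_{s,a}\ge 1$, the column $V_s$ contains a copy of every $D_a$, hence of every $C_i$ (its content vector is $\sum_a c_{s,a}\kappa^a\ge\kappa_\Sigma\ge\mathbf 1$), which is condition~(1); and condition~(2) for $\mcS_1$ is exactly the requirement that the content vectors $\sum_a c_{s,a}\kappa^a$ be pairwise non-proportional. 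For $\mcS_1$ to be a genuine refinement of $\mcS$, the width $\beta_s$ of the column $V_s$ must be a strictly positive realisable measure (i.e.\ the measure of some clopen set) subject to the bookkeeping identities $\sum_s c_{s,a}\beta_s=w_a$ for every $a$. Granting suitable such data, $\mcS_1$ is produced concretely by partitioning each base level $U_{a,0}$ of $D_a$ into $c_{s,a}$ clopen pieces of measure $\beta_s$, for every $s$ (this exhausts $U_{a,0}$ by the bookkeeping identity), and stacking the corresponding thin copies of the $D_a$ into the columns $V_s$; the cuts are made using divisibility of $K$ together with the Glasner--Weiss property, and gluing a thin copy of one $D_a$ onto one of equal measure is legitimate by Theorem~\ref{t:glasnerweiss}.

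Everything thus comes down to the following combinatorial claim: one may choose integers $c_{s,a}\ge 1$ so that (a)~the linear system $\sum_s c_{s,a}\beta_s=w_a$ has a solution with every $\beta_s$ a strictly positive realisable measure, and (b)~the vectors $\sum_a c_{s,a}\kappa^a$, $s=1,\dots,p$, are pairwise non-proportional. I would try $c_{s,a}=1+d_s\delta_{s,a}$ for pairwise distinct, large integers $d_1,\dots,d_p$, so that the content matrix is $\mathbf 1\mathbf 1^{\mathsf T}+\operatorname{diag}(d_1,\dots,d_p)$. This matrix is invertible, and for the $d_s$ large the unique solution of the system is $\beta_s=\tfrac1{d_s}\bigl(w_s-\varepsilon_s\bigr)$, where $\varepsilon_s$ is a non-negative combination, with small rational coefficients, of the $w_a$'s; hence $\beta_s>0$, and $\beta_s$ is realisable because it is obtained from the realisable measures $w_a$ by subtracting a strictly smaller one (Glasner--Weiss) and dividing by an integer (divisibility) — so~(a) holds. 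For~(b): the content vector of $V_s$ is $\kappa_\Sigma+d_s\kappa^s$, and two of these are proportional only if either $\kappa^s$ and $\kappa^{s'}$ are proportional together with a polynomial coincidence among the $d$'s — avoided by a generic choice — or $\kappa_\Sigma$ is itself proportional to one of the $\kappa^a$; in that last degenerate case one perturbs the offending columns additionally in the direction of $\kappa^b$ (possible since $\kappa^b\not\parallel\kappa_\Sigma$), which does not disturb~(a). This establishes $\mcS_1$.

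Finally, to arrange condition~(3), refine $\mcS_1$ once more: let $L$ be the least common multiple of the heights of $V_1,\dots,V_p$, cut each $V_s$ into $L/\mathrm{height}(V_s)$ thin copies of equal measure (divisibility), and stack them into a single column of height $L$ — a legitimate refinement by Theorem~\ref{t:glasnerweiss}. Stacking a column onto copies of itself multiplies its content vector by a scalar, hence does not change its repartition, so conditions (1) and (2) survive, and now all columns have height $L$. The step I expect to be the main obstacle is the combinatorial claim above: the bookkeeping identities essentially force the content matrix to be diagonally dominant (so that the widths solve a well-conditioned positive system), while the no-twin requirement is genuinely delicate exactly when $\mcS$ already contains $\mcT$-twins, and one must check that a single choice of the $c_{s,a}$, together with the realisability of the widths it produces, meets both demands at once.
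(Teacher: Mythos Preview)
Your approach is genuinely different from the paper's, and as you yourself flag at the end, the combinatorial claim is not fully established. The real gap is the degenerate case: if several of the $\kappa^a$ are proportional to $\kappa_\Sigma$ (for instance, if all but $\kappa^b$ are), then with your choice $c_{s,a}=1+d_s\delta_{s,a}$ the content vectors $\kappa_\Sigma + d_s\kappa^s$ are \emph{all} proportional to $\kappa_\Sigma$ for those $s$, hence to one another, whatever the $d_s$. Your remedy---``perturb additionally in the direction of $\kappa^b$''---changes the content matrix and hence the linear system determining the $\beta_s$; you assert this ``does not disturb~(a)'' but give no argument, and the realisability of the resulting widths (now more intricate rational combinations of the $w_a$) is not checked. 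Even in the unperturbed case your realisability sketch is thin: one must actually verify that $\sigma$ is the measure of some clopen set before Glasner--Weiss can be invoked to carve a copy of it out of $U_{a,0}$.

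The paper avoids all of this by working \emph{sequentially} rather than solving a global system. The key device is that cutting the base of a column $D$ into $N$ equal pieces (divisibility) and stacking them makes the top of $D$ as small as one likes \emph{without changing its repartition}; one then stacks a single thin copy of some other column on top, and for $N$ large the new repartition is arbitrarily close to the old one. With this the paper first fixes, one column at a time, any $D$ missing a copy of some $C_i$ by stacking it into a column that has one; then it breaks twin pairs one at a time by stacking a twin onto a non-twin---in each case choosing $N$ so large that no new coincidences of repartition are created. Only after conditions~(1) and~(2) are secured does the paper equalise heights, exactly as in your final step. This local ``cut, self-stack, then nudge'' argument sidesteps your linear system entirely; non-twinning is controlled by an elementary continuity observation rather than by engineering a single matrix $(c_{s,a})$ that works everywhere at once.
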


\begin{proof}
We begin by proving the first part of the assertion. Assume that $D$ is a column of $\mcS$ which does not contain a copy of $C_1$ (say). By cutting the base of $D$ into $N$ pieces of equal measures, then cutting vertically to form $N$ smaller columns and stacking those on top of each other, we may assume that the top of $D$ is small enough that one can map it (via a homeomorphsism preserving all measures in $K$) into one column of $\mcS$ which contains a copy of $C_1$, obtaining a refinement of $\mcS$ with the same number of columns as $\mcS$ and one less column not containing a copy of $C_1$. By choosing $N$ large enough, one can also make the repartition of the new column of $\mcS$ arbitrarily close to (but necessarily different from) the repartition of the original column, ensuring that at least two columns of this new KR-partition are not twins. Repeating this operation as necessary, we find a refinement of $\mcS$ satisfying the first item above. This will also hold true of any partition which refines it, so to simplify notation we may as well assume that $\mcS$ already satisfies that condition.

To ensure that the second condition holds, let $m(\mcS)$ be the total number of columns of $\mcS$ which have a $\mcT$-twin. If $m(\mcS)=0$ we have nothing to do; otherwise, reasoning inductively, it is enough to prove that there is $\mcS'$ refining $\mcS$ and such that $m(\mcS')< m(\mcS)$.
So, assume that $m(\mcS) \ge 2$; let $v_1,\ldots,v_p$ enumerate the repartition vectors associated to the elements of $\mcS$ (i.e.~ $v_i \ne v_j$ if $i \ne j$), and assume that there are two columns with repartition $v_1$. Pick such a column $C$, and choose also a column $D$ which is not a twin of $C$, say with repartition $v_2$. We apply the same trick as before: cut the base of $C$ into a large number $N$ of small pieces with the same measures, cut vertically to obtain $N$ smaller copies of $C$, and stack those on top of each other (note that this does not affect the repartition of these columns). We thus reduce to the case where the measure of the base of $C$ is strictly less than the measure of the base of $D$. Then consider the new $KR$-partition obtained by stacking a copy of $D$ on top of $C$, and leaving all other columns unchanged; by choosing $N$ very large, one can make the repartition of the new column arbitrarily close to that of $C$, thus different from $v_2,\ldots,v_p$; since $C$ and $D$ are not $\mcT$-twins, this repartition is also different from $v_1$, and we are done.

Once we have found $\mcS'$ satisfying the first two conditions above, let $D_1,\ldots,D_p$ denote the columns of $\mcS'$, with heights $h_1,\ldots,h_p$. Let $q$ be a common multiple of $h_1,\ldots,h_p$, and $q_i= \frac{q}{h_i}$. Using the fact that $K$ is divisible, one may cut the base of each $D_i$ into $q_i$ pieces of equal measures, and then stack those thinner columns on top of each other. One then obtains a new partition, with $p$ columns having the same repartitions as the columns of $\mcS'$ (in particular, no two columns are $\mcT$-twins), and such that the height of each column is equal to $q$.
\end{proof}

\subsection{Proof of Theorem \ref{t:Toeplitz}}
We fix a compatible metric on the Cantor space $X$. Assume that $K$ is a divisible dynamical simplex. Our construction is based on two propositions, which are simple variants of results from \cite{Ibarlucia2016}; hoping to shorten the exposition a bit, we use these results as blackboxes here. Their proofs are very similar in spirit to what we are doing here - cutting and stacking as needed in order to produce KR-partitions with good properties. 

\begin{lemma}\label{prop:towerdiameter}
Fix a KR-partition $\mcT$, with at least two columns, and $\varepsilon >0$. there exists a KR-partition $\mcS$ such that
\begin{enumerate}
\item $\mcS$ refines $\mcT$.
\item The base and top of $\mcS$ both have diameter less than $\varepsilon$.
\item All columns of $\mcS$ have the same height, each of them contains at least one copy of every column of $\mcT$, and no two of them are $\mcT$-twins.
\end{enumerate}
\end{lemma}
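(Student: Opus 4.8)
The plan is to prove Lemma~\ref{prop:towerdiameter} by first shrinking the diameters of the base and top, and only afterwards repairing the combinatorial conditions using Lemma~\ref{l:refinement}. Concretely, I would start from $\mcT$ and first apply the cutting-and-stacking trick already used in the proof of Lemma~\ref{l:refinement}: cut the base of $\mcT$ into finitely many clopen pieces of equal $K$-measure (using divisibility), cut each column vertically accordingly, and stack the resulting thin copies. This operation does not change the repartition vectors of columns, so whatever twin/copy structure one arranges can be preserved. The point of this first step is purely topological: by choosing the pieces of the base small enough (they can be taken to have diameter less than any prescribed $\delta$, since $\Clopen(X)$ separates points and $K$ consists of atomless measures of full support, so the base can be partitioned into clopen sets of arbitrarily small diameter and arbitrarily small equal measure), the new base has small diameter; and since the new top is the $\varphi$-image---wait, there is no $\varphi$ yet---since the new top is, by construction, a union of top-level atoms sitting over those small base pieces, one gets control of its diameter too, provided one also refines so that each atom of $\mcS$ has small diameter. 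So in fact I would refine $\mcT$ to a KR-partition all of whose atoms have diameter less than $\varepsilon$; this is routine from the countability of $\Clopen(X)$ and the cutting-columns procedure recalled in Section~2.2 (cut along a clopen partition of $X$ into small pieces). This secures item (2).

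Next I would invoke Lemma~\ref{l:refinement} to obtain item (3). The one hypothesis that Lemma~\ref{l:refinement} requires is that at least two columns of the partition we feed it are not $\mcT$-twins. So before applying it I need to arrange that. This is where a small amount of care is needed: after the first (diameter-shrinking) step, the columns might all be $\mcT$-twins, e.g.\ if the original $\mcT$ had columns that, after subdivision, all acquired the same repartition. To break this, I would perform one more cutting-and-stacking move of the kind used repeatedly above: take two columns $C,D$, make the base of one much smaller in measure than the base of the other (by cutting it into many equal pieces and stacking), then stack a copy of the second column on top of the first; by taking the number of pieces large the new repartition is close to, but distinct from, the old one, producing a partition with two non-twin columns while keeping all atoms of small diameter (stacking a copy of $D$ on top of $C$ only adds atoms that already appeared in $\mcS$, hence of diameter $<\varepsilon$). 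Now Lemma~\ref{l:refinement} applies and yields $\mcS'$ refining this partition (hence refining $\mcT$, since refinement is transitive) with: each column containing a copy of every column of $\mcT$, no two columns $\mcT$-twins, and all columns of equal height. One must double-check that the refinements performed inside the proof of Lemma~\ref{l:refinement}---cutting bases, stacking copies, and the final equalization of heights via divisibility---only ever use atoms already present (or vertical sub-slices thereof), so they do not destroy the diameter bound from item (2); this is indeed the case, since every atom of any refinement is a clopen subset of an atom of $\mcS$, hence has diameter at most $\varepsilon$.

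The main obstacle, then, is not any single deep step but the \emph{ordering and compatibility} of the operations: the diameter condition (2) is a "smallness" condition on atoms that is preserved under all further refinements, whereas the conditions in (3)---especially "no two columns are $\mcT$-twins" and "equal heights"---are fragile and must be installed last, via Lemma~\ref{l:refinement}. So the real content of the argument is the observation that one can do the topological shrinking first, check that it is compatible with having two non-twin columns (arranging this by one extra stacking move if necessary), and then let Lemma~\ref{l:refinement} finish the job, with the final remark that refinement is transitive and that all the cutting/stacking inside Lemma~\ref{l:refinement} respects the diameter bound because it never enlarges atoms. I would write the proof in essentially that order: (i) refine $\mcT$ so all atoms have diameter $<\varepsilon$; (ii) if needed, stack a copy of one column on a thinned copy of another to produce two non-twin columns, keeping atoms small; (iii) apply Lemma~\ref{l:refinement} and note the diameter bound survives.
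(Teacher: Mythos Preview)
There is a genuine gap in your step~(i). You conflate ``each atom has diameter $<\varepsilon$'' with ``the base and the top have diameter $<\varepsilon$''. The base of a KR-partition is the union $\bigcup_i U_{i,0}$ over \emph{all} columns $i$, and the top is $\bigcup_i U_{i,n_i}$. Even if every atom $U_{i,j}$ is tiny, the bottom atoms of different columns can sit far apart in $X$, so their union may have large diameter. Cutting the base of a single column into many small equal-measure pieces and stacking makes \emph{that} column's base small, but does nothing to bring the bases of the different columns close together. Thus your step~(i) does not secure condition~(2), and your later remark that Lemma~\ref{l:refinement} ``respects the diameter bound because it never enlarges atoms'' misses the point for the same reason: what ensures preservation of~(2) under refinement is that the base of the refinement is \emph{contained} in the base of the input, not that atoms do not grow.

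The paper's proof addresses exactly this difficulty via a different decomposition. It first fixes one column $C_1$ and cuts it so that its base and top have diameter $<\varepsilon$; splits $C_1$ as $C_0'\sqcup C_1'$; then, working on $Y=C_1'\cup C_2\cup\cdots\cup C_n$, it invokes \cite{Ibarlucia2016}*{Proposition~3.4} as a blackbox to refine to a KR-partition of $Y$ whose base and top are \emph{contained in} the base and top of $C_1'$. Adjoining $C_0'$ back gives a KR-partition of $X$ whose base and top lie inside those of $C_1$, hence have diameter $<\varepsilon$. Since $C_0'$ consists only of copies of $C_1$ while the columns coming from $Y$ were arranged not to, $C_0'$ has no $\mcT$-twin, and Lemma~\ref{l:refinement} finishes. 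The step you are missing is precisely this use of Proposition~3.4 (or an equivalent argument) to force the bases and tops of \emph{all} columns into one prescribed small clopen set; merely shrinking atom diameters cannot achieve that.
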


\begin{proof}
Denote the columns of $\mcT$ by $C_1,\ldots,C_n$. By cutting $C_1$ if necessary, we may assume that both its top and its base have diameter less than $\varepsilon$. Next, cut $C_1$ into two nonempty columns $C_0'$ and $C_1'$, and let $Y$ be the union of $C_1',C_2,\ldots,C_n$ and $\mcT'$ the KR-partition of $Y$ with columns $C_1',\ldots,C_n$. Using the same idea as before, we refine $\mcT'$ into a KR-partition $\mcS_Y$ of $Y$ such that no column of $\mcS_Y$ consists entirely of copies of $C_1'$. 
Then, applying the argument of \cite{Ibarlucia2016}*{Proposition 3.4} to $\mcS_Y$, we further refine it to a KR-partition $\mcS_Y'$ whose base and top are contained in the base and top of $C_1'$. By adjoining $C_0'$ to $\mcS_Y'$, we obtain a KR-partition of $X$ refining $\mcT$, whose base and top have diameter less than $\varepsilon$ and such that at least two of its columns are not $\mcT$-twins (by construction $C_0'$ has no $\mcT$-twin in $\mcS_Y'$). We then conclude by applying Lemma \ref{l:refinement}.
\end{proof}

We recall that a KR-partition $\mcT$ is \emph{compatible} with a clopen set $U$ if $U$ belongs to the Boolean algebra generated by $\mcT$.

\begin{lemma}\label{prop:towerrefinement}
Fix a KR-partition $\mcT$, with at least three columns, and two clopen subsets $U\sim_K V$. There exists a KR-partition $\mcS$ such that 
\begin{enumerate}
\item $\mcS$ refines $\mcT$.
\item $\mcS$ is compatible with $U$ and $V$.
\item \label{eq:refinement} In each column of $\mcS$ there are as many atoms contained in $U$ as atoms contained in $V$.
\item \label{eq:notwins} All columns of $\mcS$ have the same height, each column of $\mcS$ contains at least one copy of every column of $\mcT$, and no two of them  are $\mcT$-twins.
\end{enumerate}
\end{lemma}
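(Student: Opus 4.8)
The plan is to reduce the statement to two ingredients: a ``balancing'' refinement of the kind carried out in \cite{Ibarlucia2016}, and Lemma \ref{l:refinement}. First I would make $\mcT$ compatible with $U$ and $V$: applying the cutting-along-columns operation recalled in Section \ref{s:notations}, successively to $U$ and then to $V$, produces a refinement $\mcT_1$ of $\mcT$, with the same base, which is compatible with both. Write $P_1,\dots,P_N$ for the columns of $\mcT_1$, so that $N\ge n\ge 3$; for each $a$ let $u_a$, resp.\ $v_a$, be the number of atoms of $\mcT_1$ contained in $U$, resp.\ $V$, inside $P_a$, and set $w_a=u_a-v_a$. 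Writing $b_a(\mu)=\mu(\text{base}(P_a))$ and using that all atoms in a column have the same measure, $\mu(U)=\sum_a u_a b_a(\mu)$ and $\mu(V)=\sum_a v_a b_a(\mu)$; since $U\sim_K V$ this gives $\sum_a w_a b_a(\mu)=0$ for every $\mu\in K$, and as each $b_a(\mu)$ is positive (full support) the integer vector $w=(w_a)$ is either $0$ or has entries of both signs.

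The heart of the argument is to refine $\mcT_1$ to a KR-partition $\mcS_0$, still compatible with $U$ and $V$, all of whose columns are \emph{balanced} (as many atoms in $U$ as in $V$, i.e.\ condition \ref{eq:refinement}), and which has at least two columns that are not $\mcT$-twins. A column $D$ of a refinement of $\mcT_1$ containing $k^D_a$ copies of $P_a$ has exactly $\sum_a k^D_a w_a$ more atoms in $U$ than in $V$, so being balanced means $\langle k^D,w\rangle=0$. If $w=0$ one simply takes $\mcS_0=\mcT_1$, which is then automatically balanced; moreover, cutting along columns leaves at least one slice of each of the $\ge 2$ columns of $\mcT$, so two of the $P_a$ lie in distinct columns of $\mcT$ and are therefore not $\mcT$-twins. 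If $w\ne 0$, consider the rational polyhedral cone $\mathcal{C}=\{x\ge 0:\langle x,w\rangle=0\}$; for every $\mu\in K$ the positive vector $(b_a(\mu))_a$ lies in its relative interior, hence decomposes as $\sum_s\beta_s(\mu)\,k^{(s)}$ over the (primitive integral) extreme rays $k^{(1)},\dots,k^{(r)}$ of $\mathcal{C}$, with coefficients $\beta_s(\mu)>0$ depending affinely and continuously on $\mu$. I would build $\mcS_0$ so that, for each $s$, it has one column whose sequence of copies of the $P_a$'s realizes $k^{(s)}$ and whose base has measure $\beta_s(\mu)$; producing the clopen sets and gluing maps that realize this is a routine exercise with the Glasner--Weiss property and the divisibility of $K$, of exactly the kind done in \cite{Ibarlucia2016}. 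Every column of $\mcS_0$ is then balanced. Finally, a short case analysis on the supports of the extreme rays of $\mathcal{C}$ — using that $\mcT_1$ was obtained by cutting \emph{every} column of $\mcT$ and that there are at least three of them — shows that two of the $k^{(s)}$ yield columns of $\mcT_1$ projecting to non-proportional vectors of copy-numbers over the columns of $\mcT$, so the corresponding columns of $\mcS_0$ are not $\mcT$-twins. (This is precisely where $n\ge 3$ is needed: for $n=2$ and $w\ne 0$ the balanced column types span a one-dimensional space, so all columns must be $\mcT$-twins and condition \ref{eq:notwins} is unattainable.)

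To conclude, apply Lemma \ref{l:refinement} to $\mcS_0$, which refines $\mcT$ and has two columns that are not $\mcT$-twins: one obtains a refinement $\mcS$ all of whose columns have equal height, contain a copy of every column of $\mcT$, and are pairwise not $\mcT$-twins, i.e.\ condition \ref{eq:notwins}. It then remains to check that the operations used in the proof of Lemma \ref{l:refinement} preserve the property of refining $\mcT$, compatibility with $U$ and $V$, and the balance condition \ref{eq:refinement}. This is routine: each such operation — subdividing the base of a column into finitely many pieces of equal measure and re-stacking, and transferring a previously thinned top of one column onto a slice of another column via a homeomorphism in $G_K$ — only cuts along atoms already present and stacks whole copies of columns, so a clopen set that was a union of atoms stays one, while the number of atoms in $U$ inside a column built by stacking is the sum of the corresponding numbers for the constituents and is multiplied by a common integer when a base is subdivided; hence equality with the number of atoms in $V$ is maintained. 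Thus $\mcS$ satisfies all four conditions.

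The point I expect to be the main obstacle is the construction of $\mcS_0$ when $w\ne 0$. One cannot first obtain a balanced refinement by the methods of \cite{Ibarlucia2016} and then repair the twin condition afterwards: the vector of copy-numbers over the columns of $\mcT$ of any column of a refinement of a KR-partition is a non-negative integer combination of those of the columns of that partition, so if all columns of a partition are pairwise $\mcT$-twins then so are the columns of every one of its refinements. Balance and the presence of two non-$\mcT$-twin columns must therefore be obtained simultaneously, which is what the extreme-ray construction above achieves and for which the hypothesis $n\ge 3$ cannot be dropped; the remaining delicate point there is the familiar one of realizing the prescribed base measures $\beta_s(\mu)$ by genuine clopen sets, which is handled by the Glasner--Weiss property and divisibility exactly as in \cite{Ibarlucia2016}.
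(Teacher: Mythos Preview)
Your overall architecture matches the paper's: make $\mcT$ compatible with $U,V$, produce a balanced refinement with two non-$\mcT$-twin columns, then invoke Lemma~\ref{l:refinement}. Your observation that balance and non-twins must be achieved together (since refining preserves the property ``all columns are $\mcT$-twins'') is correct, as is your argument that extreme rays of $\{x\ge 0:\langle x,w\rangle=0\}$ have $\mcT_1$-support of size at most two, so their $\mcT$-projections cannot all be proportional once $n\ge 3$.

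The genuine gap is in the construction of $\mcS_0$, specifically the claim that the base measures $\beta_s(\mu)$ can be realized as $\mu\mapsto\mu(A_s)$ for clopen $A_s$. Two issues. First, the decomposition $b(\mu)=\sum_s\beta_s(\mu)k^{(s)}$ over extreme rays is unique---hence canonically affine in $\mu$---only when the cone is simplicial, which happens precisely when $w$ has at most one positive or at most one negative entry; already for $w=(1,1,-1,-1)$ one needs an affine $t(\mu)$ satisfying $\max(0,b_3-b_2)<t<\min(b_1,b_3)$ on all of $K$, and none need exist. Second, even granting an affine $\beta_s$, the Glasner--Weiss property and divisibility do not realize arbitrary positive affine functions on $K$ as clopen-set measures: they only let you find, inside a given clopen, a subset with the \emph{same} measure-function as another given clopen, or divide an existing measure-function by an integer. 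Nothing in \cite{Ibarlucia2016} proceeds via cone geometry, so that appeal does not rescue the step. The paper sidesteps all of this: it first reduces (by cancelling $\min(n_C(U),n_C(V))$ atoms in each column) to the case where each column meets at most one of $U,V$; then, in two cases, it explicitly sets aside one or two balanced columns built by hand that cannot be $\mcT$-twins of anything else, applies \cite{Ibarlucia2016}*{Proposition~3.5} as a black box to balance the remaining columns, reassembles, and only then invokes Lemma~\ref{l:refinement}. If you want to salvage the polyhedral route, you would effectively have to reduce first to the simplicial case $p=1$ or $q=1$---which is essentially what the paper's preliminary reduction accomplishes.
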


\begin{proof}
We may and do assume that $U,V$ are neither empty nor the whole $X$, and that they are disjoint. By cutting along the columns of $\mcT$, one can make sure that it is compatible with both $U$ and $V$, and this will remain true of any KR-partition refining it. So we assume that $\mcT$ satisfies this condition.

In each column $C$ of $\mcT$ which meets both $U$ and $V$, let $n_C(U)$ denotes the number of atoms of $C$ contained in $C$, and define similarly $n_C(V)$, and $m_C=\min(n_C(U),n_C(V))$. Shrink $U$ to $U'$ by removing from $U$ $m_C$ atoms of $C$ contained in $U$ in each column $C$, and similarly shrink $V$ to $V'$. If $U'$ or $V'$ is empty then our partition $\mcT$ already satisfies $\eqref{eq:refinement}$ and the proof is concluded by applying Lemma \ref{l:refinement}. Assume this is not the case; we have to find a refinement $\mcS$ of $\mcT$ such that \eqref{eq:refinement} and \eqref{eq:notwins} hold with $U'$ and $V'$ in place of $U,V$. That is, we have reduced to the case where each column of $\mcT$ meets at most one of $U$ or $V$. We now assume that we are in that situation.

We consider two cases. First, it might happen that one column, say $C_1$, of $\mcT$ meets neither $U$ nor $V$. Letting $C_2,\ldots,C_n$ denote the other columns of $\mcT$, they form a KR-partition $\mcT_Y$ of some clopen $Y$, in which $U$ and $V$ are contained; applying \cite{Ibarlucia2016}*{Proposition~3.5} to this partition, we refine $\mcT_Y$ to a new KR-partition $\mcT_Y'$ of $Y$ such that \eqref{eq:refinement} is satisfied. Adjoining $C_1$ to $\mcT_Y'$, we obtain a KR-partition of $X$ satisfying $\eqref{eq:refinement}$, and by construction $C_1$ has no $\mcT$-twin in $\mcT_Y'$. Thus we conclude by applying Lemma \ref{l:refinement} to this KR-partition.

The remaining case is when each column of $\mcT$ meets either $U$ or $V$. Since $\mcT$ is assumed to have at least three columns, we may assume w.l.o.g that $C_1$ and $C_2$ meet $U$, while $C_3$ meets $V$. Say that $C_1$ has $n_1$ atoms in $U$, $C_2$ has $n_2$ atoms in $U$, and $C_3$ has $n_3$ atoms in $V$. Then one may form a new KR-partition, with one column formed of $n_3$ copies of $C_1$ stacked onto $n_1$ copies of $C_3$, another consisting of $n_3$ copies of $C_2$ stacked onto $n_2$ copies of $C_3$, and the other columns $C_1',\ldots,C_n'$ being copies of $C_1,\ldots,C_n$. In this new KR-partition, we have two columns $D_0,D_1$ which are not $\mcT$-twins and which contain as many atoms in $U$ as in $V$; set these two columns apart, and remove the corresponding parts of $U,V$ to form $U'$, $V'$. Then $C_1',\ldots,C_n'$ form a KR-partition of some clopen $Y$, in which $U'$ and $V'$ are contained, and $U'$, $V'$ have equal measures. By applying \cite{Ibarlucia2016}*{Proposition~3.5}, we thus find a KR-partition of $Y$ which satisfies \eqref{eq:refinement}, and adjoining $D_0$, $D_1$ to this KR-partition yields a KR-partition satisfying \eqref{eq:refinement} and with two columns which are not $\mcT$-twins. We conclude by applying Lemma \ref{l:refinement} to this KR-partition.
\end{proof}

Using our previous lemmas, we may form a sequence of KR-partitions $\mcT_n$, with columns $(C^n_1,\ldots,C^n_{k_n})$ such that:
\begin{enumerate}
\item $\mcT_0$ consists of two atoms $A,B$ (i.e. there are only two columns, each of height $1$).
\item For all $n \ge 1$, each $\mcT_n$ has at least $3$ columns.
\item The diameter of the base and top of $\mcT_n$ converge to $0$.
\item Given any clopen $U,V$ such that $U \sim_K V$, there exists $n$ such that $\mcT_n$ is compatible with $U,V$ and each column of $\mcT_n$ has as many atoms contained in $U$ as atoms contained in $V$.
\item All columns of $\mcT_n$ have the same height, each column of $\mcT_{n+1}$ contains at least one copy of every column of $\mcT_n$, and no two of them are $\mcT_n$-twins.
\item For any column $C$ of $\mcT_{n+1}$, the ordering of levels of $C$ is such that the copies of $C^n_1$ contained in $C$ come first, followed by the copies of $C^n_2$, and so on.
\end{enumerate}

Note that $A,B$ above may be any two disjoint, nonempty clopen subsets partitioning the ambient Cantor space (and we may use the same $A$, $B$ when applying our construction to any divisible dynamical simplex). For those who are more used to thinking in terms of Bratteli diagrams, we note that the Bratteli diagram that we built above is both simple and left-ordered.

There exists a unique homeomorphism $\varphi$ of $X$ which extends all partial automorphims associated to $\mcT_n$, and the first four conditions above imply that $\varphi$ is minimal and $K=\{\mu \colon \varphi_* \mu=\mu\}$ (see \cite{Ibarlucia2016}*{Proposition~3.6 and Corollary~4.3}). It remains to prove that $\varphi$ is a Toeplitz subshift on the alphabet $\{0,1\}$. The main step is to prove that $A,B$ form a generating partition.

\begin{prop}
For each $x \ne y \in X$, there exists $k$ such that $\varphi^k(x) \in A$ and $\varphi^k(y) \in B$.
\end{prop}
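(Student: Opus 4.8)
The plan is to prove the stronger statement that $\{A,B\}$ is a \emph{generating partition} for $\varphi$, in the precise oriented form given. The unoriented part — that $x\ne y$ implies $\varphi^k(x),\varphi^k(y)$ do not lie in the same element of $\{A,B\}$ for every $k$ — is where conditions (5), (6) and the absence of $\mcT_n$-twins enter, and I would deduce it from the following reconstruction claim, proved by induction on $n$: if $\varphi^k(x)$ and $\varphi^k(y)$ lie in the same atom of $\mcT_n$ for all $k\in\Z$, then the same holds for $\mcT_{n+1}$. Granting this, an immediate induction (with trivial base case $n=0$, since the atoms of $\mcT_0$ are exactly $A$ and $B$) shows that two points with the same $\{A,B\}$-name lie in the same atom of every $\mcT_n$, hence — as $(\mcT_n)$ generates $\Clopen(X)$ — coincide.

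For the inductive step, I would observe that the common $\mcT_n$-itinerary $k\mapsto(\text{atom of }\mcT_n\text{ containing }\varphi^k(x))$ of $x$ and $y$ breaks up \emph{canonically} into ``column blocks'': a new block starts exactly when the point enters $\text{base}(\mcT_n)$, and inside a block the point climbs a single column of $\mcT_n$. So the itinerary determines a bi-infinite word $\omega\in\{1,\dots,k_n\}^{\Z}$ recording which columns of $\mcT_n$ are visited, in order. By (5) and (6), each column $C^{n+1}_i$ of $\mcT_{n+1}$ reads over $\mcT_n$ as the word $w_i=1^{a_{i,1}}2^{a_{i,2}}\cdots k_n^{a_{i,k_n}}$ with all exponents $\ge 1$; since $k_n\ge 2$, a symbol $k_n$ immediately followed by a symbol $1$ can occur inside a concatenation of the $w_i$'s only at a $\mcT_{n+1}$-column boundary, so $\omega$ parses \emph{uniquely} as a bi-infinite concatenation of the $w_i$'s. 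Since no two columns of $\mcT_{n+1}$ are $\mcT_n$-twins and all columns of $\mcT_{n+1}$ have equal height, the $w_i$ are pairwise distinct, so the parse identifies for each $k$ the $\mcT_{n+1}$-column being traversed; together with the level inside the current $\mcT_n$-copy (which is part of the $\mcT_n$-itinerary) this pins down the $\mcT_{n+1}$-atom of $\varphi^k(x)$, and it is computed from exactly the same data for $\varphi^k(y)$. Hence $\varphi^k(x)$ and $\varphi^k(y)$ lie in the same atom of $\mcT_{n+1}$.

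To obtain the orientation, I would first unwind the left-ordering (condition (6)) down to $\mcT_0$: this gives $\text{base}(\mcT_n)\subseteq A$ and $\text{top}(\mcT_n)\subseteq B$ for every $n\ge 1$ (level $0$ of any column of $\mcT_n$ lies in the first copy of $C^{n-1}_1$, whose level $0$ lies in the first copy of $C^{n-2}_1$, and so on down to a copy of $A=C^0_1$; dually for the top level), and more generally that a fixed nonempty initial segment of levels of \emph{every} column of $\mcT_n$ lies in $A$, immediately followed by a level in $B$. Given $x\ne y$, the reconstruction argument applied to the orbits of $x$ and $y$ produces, after translating the time parameter, an $n$ and a time at which $x$ and $y$ lie in distinct atoms of $\mcT_n$; since all columns of $\mcT_n$ have equal height the $\mcT_n$-``phase'' of $y$ relative to $x$ is then fixed, so one lets $n$ grow and uses the twin-freeness of the columns that $\varphi^k(y)$ visits (together with minimality, which forces it to visit them all) to locate a time $k$ with $\varphi^k(x)\in\text{base}(\mcT_n)\subseteq A$ while $\varphi^k(y)$ has climbed into the $B$-part of its column, so that $\varphi^k(y)\in B$.

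The decisive point is the parsing step in the inductive argument: choosing the right encoding of the $\mcT_n$-itinerary by the word $\omega$, and verifying that conditions (5), (6) and twin-freeness make $\omega$ uniquely decodable back to the $\mcT_{n+1}$-itinerary. The remaining care goes into the orientation step — producing a single time $k$ placing $x$ in an $A$-level and $y$ in a $B$-level simultaneously — which is precisely why the observations $\text{base}(\mcT_n)\subseteq A$, $\text{top}(\mcT_n)\subseteq B$ and a fresh appeal to twin-freeness are needed there.
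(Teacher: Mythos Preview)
Your inductive ``unique parsing'' argument for the unoriented statement is correct and is essentially the paper's approach, repackaged. The paper argues by taking the least $m$ at which some $\varphi^k(x),\varphi^k(y)$ land in different atoms of $\mcT_m$, assumes $m=n+1>0$, and splits into two cases according to whether $x$ and $y$ sit at the same level of their $\mcT_{m}$-columns. In your language, Case~1 (same level) is exactly the step ``the codewords $w_i$ are pairwise distinct because no two columns are $\mcT_n$-twins'', while Case~2 (different levels) is your synchronization step ``a letter $k_n$ followed by a letter $1$ occurs only at a $\mcT_{n+1}$-boundary''; the paper implements the latter concretely by moving $x$ to the base of a column with the maximal number of copies of $C^n_1$ and comparing first-passage times into $C^n_2$. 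Your presentation handles both cases uniformly and is arguably cleaner, but it uses precisely the same ingredients (equal heights, left-ordering, every column appearing, no twins).

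One remark on the oriented conclusion. The paper's own proof, despite the statement, only shows that the $\{A,B\}$-itineraries of $x$ and $y$ differ at some $k$, i.e.\ the unoriented version; this is all that is used afterwards to see that the coding map $g\colon X\to\{0,1\}^{\Z}$ is injective. Your third paragraph aims for more, and the observations $\text{base}(\mcT_n)\subseteq A$, $\text{top}(\mcT_n)\subseteq B$ are correct, but the final sentence (``let $n$ grow and use twin-freeness \dots'') is not yet a proof: you have not explained why the phase offset together with twin-freeness forces a time where $x$ is still in an $A$-level while $y$ has already entered a $B$-level, rather than the reverse. Since the oriented form is not needed (and the paper does not prove it either), this does not affect the validity of your argument for the proposition's intended use.
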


\begin{proof}
Fix $x \ne y$. Let $m$ be the smallest integer such that there exists $k$ for which $\varphi^k(x)$ and $\varphi^k(y)$ belong to different atoms of $\mcT_m$; we want to prove that $m=0$. So assume for a contradiction that $m=n+1$.

We may as well assume that $x,y$ belong to different atoms of $\mcT_m$. Denote by $i(x)$ the smallest integer such that $\varphi^{-i(x)}(x)$ belongs to the base of $\mcT_n$, and similarly for $i(y)$ (thus $i(x)$ measures how far the atom containing $x$ is from the base of its column). 

We distinguish two cases: first, assume that $i(x)=i(y)$. Then, replacing $x,y$ by $\varphi^{-i(x)}(x)$, $\varphi^{-i(y)}(y)$ respectively, we have to deal with the case where $x$, $y$ belong to the base of $\mcT_m$, necessarily in different columns. Since no two columns of $\mcT_m$ are $\mcT_n$-twins, there exists a positive $j$ such that $\varphi^j(x)$ and $\varphi^j(y)$ belong to different $\mcT_n$-columns, contradicting the minimality of $m$.

The remaining case is that when $i(x) \ne i(y)$; find a column $C$ of $\mcT_m$ which contains the largest number of copies of $C^n_1$, and $j$ such that $\varphi^j(x)$ belongs to the base of $C$ (such a $j$ exists because we already know that $\varphi$ is minimal). Since $i(x) \ne i(y)$, and all columns of $\mcT_m$ have the same height, $\varphi^j(y)$ cannot belong to the base of $\mcT_m$. Still, $\varphi^j(y)$ must belong to $C^n_1$ by our assumption on $m$. But then the smallest positive $l$ such that $\varphi^{l+j}(y) \in C^n_2$ (which happens inside the same column of $\mcT_m$ as that which contains $\varphi^j(y)$, since each column of $\mcT_m$ contains at least one copy of each column of $\mcT_n$) must be such that $\varphi^{j+l}(x) \in C^n_1$, a contradiction.
\end{proof}

\begin{proof}[End of the proof of Theorem \ref{t:Toeplitz}]

Now, define $g \colon X \to \{0,1\}^\Z$ by setting $g(x)(n)=0 \leftrightarrow \varphi^n(x) \in A$. This is an embedding of $(X,\varphi)$ into $(\{0,1\}^\Z,S)$, where $S$ is the shift map - indeed, clearly $g$ is continuous and equivariant, and the previous proposition precisely asserts that $g$ is injective.

It remains to prove that $g(X)$ is Toeplitz. Denote by $x_{\infty}$ the intersection of the bases of $\mcT_n$. Let $B_n$ be the basis of $\mcT_n$, and $N$ the common height of all columns of $\mcT_n$. Then we have $\varphi^{Np}(B_n)=B_n$ for all $p \in \Z$. Since the sequence $g(B_n)$ forms a neighborhood basis for $g(x_\infty)$, this proves that $g(x_\infty)$ is quasiperiodic. As $\varphi$ is minimal, $g(x_\infty)$ is not periodic, so it is a Toeplitz sequence and we are done. This concludes the proof. \end{proof}

\begin{remark*} If one is willing to increase the number of blackboxes being used, it is actually very simple to deduce theorem \ref{t:Toeplitz} from \cite{Ibarlucia2016}: simply note that, if $K$ is a divisible dynamical simplex, then any KR-partition can be refined by a further KR-partition, all of whose columns have the same height (this is the last part of the proof of Lemma \ref{l:refinement}); then use a theorem of Sugisaki \cite{Sugisaki2001}*{Theorem~1.2} to conclude that the minimal homeomorphism produced by the construction of \cite{Ibarlucia2016}, with the additional condition that all columns of the sequence of partitions used in the construction have the same height, is strongly orbit equivalent to a Toeplitz subshift. However, the construction of \cite{Sugisaki2001} is fairly technical and dependent on Giordano--Putnam--Skau's theory, which is much less elementary than our cutting and stacking arguments above. Thus we feel it is worth going to the trouble of detailing our elementary argument.
\end{remark*}

\begin{question} Can one give a simple characterization of the dynamical simplices $K$ for which there exists a Toeplitz subshift $\varphi$ such that $K$ is the set of all $\varphi$-invariant Borel probability measures? 
\end{question}
Theorem \ref{t:Toeplitz} amounts to the statement that divisibility is a sufficient condition.

\section{Orbit equivalence and isomorphism of dynamical simplices}

In this section we go over some basic descriptive set-theoretic facts (namely, checking that certain sets and maps are Borel) and explain why orbit equivalence of minimal homeomorphisms can be recast as isomorphism of dynamical simplices. We will make use of the \emph{Effros Borel structure} on the set $\mcF(\Prob(X))$ made up of all \emph{nonempty} closed subsets of $\Prob(X)$; this is the $\sigma$-algebra generated by all sets of the form 
$$\{F \in \mcF(\Prob(X)) \colon F \cap O \ne \emptyset\} $$
where $O$ ranges over all open subsets of $\Prob(X)$. Equivalently, this is the $\sigma$-algebra of all Borel sets for the Vietoris topology on $\mcF(\Prob(X))$, and $\mcF(\Prob(X))$ endowed with the Effros Borel structure is a standard Borel space. 

The Kuratowski--Ryll-Nardzewski theorem allows us to fix for the remainder of this section a sequence of Borel maps $\mu_n \colon \mcF(\Prob(X)) \to \Prob(X)$ such that $\{\mu_n(K)\}$ is dense in $K$ for any $K \in \mcF(\Prob(X))$. For further details on the Vietoris topology, the Effros Borel structure and related results we refer the reader to \cite{Kechris1995}. The following lemma is well-known and appears for instance in \cite{Foreman2000}.

\begin{lemma}
Let $X$ be a Cantor space. The set $\Min(X)$ of minimal homeomorphisms of $X$ is a $G_\delta$ subset of the Polish group $\Homeo(X)$.
\end{lemma}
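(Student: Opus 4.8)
The plan is to show that $\Min(X)$ is a countable intersection of open sets, by expressing minimality as a countable conjunction of open conditions. Recall that a homeomorphism $g$ of $X$ is minimal if and only if every orbit is dense, which is equivalent to saying that for every nonempty clopen set $U$, the orbit of every point meets $U$; since $\Clopen(X)$ is countable, and since $X = \bigcup_{n \in \Z} g^n(U)$ exactly when $\bigcup_{n \in \Z} g^n(U) = X$, it suffices to understand when this union covers $X$ for a fixed nonempty clopen $U$.

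First I would fix an enumeration $(U_k)_{k \in \N}$ of $\Clopen(X)$ and rewrite minimality as: for every $k$ with $U_k \ne \emptyset$, one has $\bigcup_{n \in \Z} g^n(U_k) = X$. By compactness of $X$, this infinite union equals $X$ if and only if some finite subunion $\bigcup_{|n| \le N} g^n(U_k)$ equals $X$. So we set
$$V_k = \left\{ g \in \Homeo(X) \colon \exists N \ \bigcup_{n=-N}^{N} g^n(U_k) = X \right\},$$
and claim $\Min(X) = \bigcap_{k \colon U_k \ne \emptyset} V_k$. The nontrivial direction of this equality is that if each orbit meets every nonempty clopen set, then in fact the orbit closure is all of $X$ — but this is immediate since the clopen sets form a basis, so an orbit meeting every nonempty clopen set is dense.

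Next I would check that each $V_k$ is open in $\Homeo(X)$. Writing $V_k = \bigcup_N V_{k,N}$ where $V_{k,N} = \{g \colon \bigcup_{n=-N}^{N} g^n(U_k) = X\}$, it is enough to show each $V_{k,N}$ is open. If $g \in V_{k,N}$, then the clopen sets $g^{-N}(U_k), \ldots, g^N(U_k)$ cover $X$; any $h$ agreeing with $g$ on the finite family of clopen sets involved in computing $g^n(U_k)$ for $|n| \le N$ (that is, taking $U_k$ to the same clopen set as $g$ does under each power up to $N$ — one uses here that the basic neighborhoods of $g$ are defined by prescribing the images of finitely many clopen sets, and that composition of such constraints is again such a constraint) will satisfy $h^n(U_k) = g^n(U_k)$ for all $|n| \le N$, hence $h \in V_{k,N}$. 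Thus $V_{k,N}$ contains a neighborhood of $g$, so it is open, $V_k$ is open, and $\Min(X)$ is a countable intersection of open sets, i.e.\ $G_\delta$.

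The main obstacle, such as it is, is the bookkeeping in the last step: verifying that the condition ``$h^n(U_k) = g^n(U_k)$ for all $|n| \le N$'' is genuinely an open condition on $h$, which requires noting that $h \mapsto h^n(U)$ is locally constant on $\Homeo(X)$ for fixed clopen $U$ and fixed $n$ (equivalently, that the basic clopen neighborhoods of $\Homeo(X)$ described in Section~\ref{s:notations} can be intersected over the finitely many clopen sets arising as intermediate images). This is routine but is the one place where one must be careful about how the topology on $\Homeo(X)$ interacts with iteration. Everything else — the compactness reduction to finite subcovers, and the reduction of density to ``meets every nonempty clopen set'' — is standard.
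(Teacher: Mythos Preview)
Your proof is correct and follows essentially the same approach as the paper: both express minimality as the countable conjunction, over all nonempty clopen $U$, of the condition that some finite union of iterates of $U$ covers $X$, and observe that each such condition is open in $\Homeo(X)$. The only cosmetic difference is that the paper uses the one-sided union $\bigcup_{n=0}^{N} \varphi^n(U)$ (invoking bijectivity of $\varphi$ to pass from the two-sided union), which slightly streamlines the openness check, whereas you use the two-sided union $\bigcup_{|n|\le N} g^n(U)$ and verify openness more explicitly.
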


\begin{proof}
A homeomorphism $\varphi$ is minimal iff $X$ has no nontrivial invariant open subset, which is the same as saying that for any nonempty open subset $U$ one has $X= \bigcup_{n \in \Z} \varphi^n(U)$. By compactness of $X$ and bijectivity of $\varphi$, this is the same as saying that there exists $N  \ge 0$ such that $X= \bigcup_{n=0}^N \varphi^n(U)$. We may restrict our attention to clopen $U$, since those form a basis; this yields the equality
$$\Min(X) = \bigcap_{U \in \Clopen(X)} \bigcup_{N \in \N} \{\varphi \in \Homeo(X) \colon \bigcup_{n=0}^N \varphi^n(U)=X\}\ . $$
Each subset $\{\varphi \in \Homeo(X) \colon \bigcup_{n=0}^N \varphi^n(U)=X\}$ is open by definition of the topology on $\Homeo(X)$, proving that $\Min(X)$ is a $G_\delta$ subset of $\Homeo(X)$.
\end{proof}

\begin{lemma}
Let $X$ be a Cantor space. Given $\varphi \in \Homeo(X)$, let $K_\varphi$ denote the set $\{\mu \in \Prob(X) \colon \varphi_* \mu=\mu\}$. Then the map $\varphi \mapsto K_\varphi$ is a Borel map from $\Homeo(X)$ to $\mcF(\Prob(X))$.
\end{lemma}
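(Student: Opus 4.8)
The plan is to verify directly the defining property of the Effros Borel structure: the map $\varphi \mapsto K_\varphi$ is Borel if and only if, for every open $O \subseteq \Prob(X)$, the set $\{\varphi \in \Homeo(X) \colon K_\varphi \cap O \neq \emptyset\}$ is Borel. First I would record that $K_\varphi$ is always a nonempty compact convex subset of $\Prob(X)$ — nonemptiness coming from the Markov--Kakutani fixed point theorem applied to the affine continuous map $\varphi_*$ on the compact convex set $\Prob(X)$ — so the map genuinely takes values in $\mcF(\Prob(X))$. Fixing a compatible metric $d$ on the compact metrizable space $\Prob(X)$ together with a countable dense set, it is enough to treat $O = B(\nu,r)$ where $\nu$ ranges over the dense set and $r$ over the rationals, since the corresponding sets already generate the Effros structure and ``$K_\varphi \cap \bigcup_j O_j \neq \emptyset$'' is the countable union of ``$K_\varphi \cap O_j \neq \emptyset$''.

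The key elementary observation is that, for each fixed $\mu \in \Prob(X)$, the map $\varphi \mapsto \varphi_*\mu$ is continuous from $\Homeo(X)$ to $\Prob(X)$: indeed $\varphi_*\mu(A) = \mu(\varphi^{-1}(A))$ for clopen $A$, and $\varphi \mapsto \varphi^{-1}(A)$ is locally constant by the description of the topology on $\Homeo(X)$ recalled in Section~\ref{s:notations}, so the composition of $\varphi \mapsto \varphi_*\mu$ with each topology-generating map $\nu \mapsto \nu(A)$ is locally constant. Consequently, for any open $U$ and any integer $k \geq 1$, the set
$$P_k(U) := \{\varphi \in \Homeo(X) \colon \exists \mu \in U \ d(\varphi_*\mu,\mu) < 1/k\}$$
is open (it is the union over $\mu \in U$ of the open sets $\{\varphi \colon d(\varphi_*\mu,\mu) < 1/k\}$), and hence $P(U) := \bigcap_k P_k(U)$ is a $G_\delta$ subset of $\Homeo(X)$, in particular Borel.

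The main point, and the only real subtlety, is to relate these open approximations to the sets we actually care about; the mismatch is that $P(U)$ only ``sees'' $\overline U$ rather than $U$, and this is resolved by compactness. I would prove two inclusions: (i) if $K_\varphi \cap U \neq \emptyset$ then $\varphi \in P(U)$, witnessed by any $\mu \in K_\varphi \cap U$, for which $d(\varphi_*\mu,\mu) = 0$; and (ii) if $\varphi \in P(U)$ then $K_\varphi \cap \overline U \neq \emptyset$, by taking witnesses $\mu_k \in U$ with $d(\varphi_*\mu_k,\mu_k) < 1/k$, passing to a convergent subsequence using compactness of $\Prob(X)$, and invoking continuity of $\varphi_*$ and of $d$ to see that the limit lies in $K_\varphi \cap \overline U$. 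Combining (i) and (ii) with the elementary identities $\{\varphi \colon K_\varphi \cap B(\nu,r) \neq \emptyset\} = \bigcup_{s < r,\, s \in \Q}\{\varphi \colon K_\varphi \cap \overline B(\nu,s) \neq \emptyset\}$ and $\{\varphi \colon K_\varphi \cap \overline B(\nu,s) \neq \emptyset\} = \bigcap_m \{\varphi \colon K_\varphi \cap B(\nu, s + \tfrac1m) \neq \emptyset\}$ — the latter again because $K_\varphi$ is compact, so the decreasing intersection of the nonempty compact sets $K_\varphi \cap \overline B(\nu, s + \tfrac1m)$ is nonempty — one obtains $\{\varphi \colon K_\varphi \cap \overline B(\nu,s) \neq \emptyset\} = \bigcap_m P(B(\nu, s + \tfrac1m))$, which is Borel; feeding this back into the first identity shows $\{\varphi \colon K_\varphi \cap B(\nu,r) \neq \emptyset\}$ is Borel, which completes the argument.

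I would also mention a slicker but less self-contained route: the set $D = \{(\varphi,\mu) \colon \varphi_*\mu = \mu\}$ is closed in $\Homeo(X) \times \Prob(X)$ with compact vertical sections, and assigning to each point the corresponding section yields a Borel map into $\mcF(\Prob(X))$, by the standard theory of projections of Borel sets with compact sections. Since the hands-on argument above is short, I would keep it as the main proof; the only place where care is needed is the compactness squeeze in the last paragraph, everything else being routine.
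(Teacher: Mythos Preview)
Your argument is correct. The core idea --- extract a convergent subsequence of approximate witnesses in the compact space $\Prob(X)$ and use continuity of $\varphi_*$ to see that the limit is genuinely $\varphi$-invariant --- is exactly the same as in the paper. The packaging differs, however. The paper exploits the particular basis of $\Prob(X)$ given by clopen sets: it reduces to sets of the form
\[
A=\{\varphi : \exists \mu \in K_\varphi \ \forall j \ \mu(U_j)\in I_j\}
\]
with $U_j$ clopen and $I_j$ closed intervals, and shows directly that each such $A$ is \emph{closed} in $\Homeo(X)$ (taking $\varphi_i\to\varphi$ with witnessing $\mu_i$, a limit $\mu$ works). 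Since open sets in $\Prob(X)$ are countable unions of such basic closed sets, this immediately gives that the relevant preimages are $F_\sigma$. Your route instead works with metric balls and the auxiliary ``approximately invariant'' sets $P_k(U)$, at the cost of an extra open/closed ball squeeze; this is a bit longer but perfectly sound, and has the minor virtue of not using the specific clopen basis at all. Your ``slicker route'' at the end --- joint continuity of $(\varphi,\mu)\mapsto\varphi_*\mu$, hence closedness of the graph and of $\{\varphi:K_\varphi\cap C\neq\emptyset\}$ for closed $C$ --- is in fact precisely the paper's argument phrased abstractly, so you might promote that to the main proof and drop the $P_k(U)$ machinery.
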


\begin{proof}
We have to prove that, for any open subset $O \subset \Prob(X)$, the set $\{\varphi \in \Homeo(X) \colon K_\varphi \cap O \ne \emptyset \}$ is Borel.
By definition of the topology on $\Prob(X)$, and the fact that any open interval is a countable union of closed subintervals, it is sufficient to show that for any integer $n$, any clopen sets $U_1,\ldots,U_n$ and any closed intervals $I_0,\ldots,I_n$, the set 
$$A=\{\varphi \in \Homeo(X) \colon \exists \mu \in K_\varphi \ \forall j \in \{0,\ldots,n \} \ \mu(U_j) \in I_j\}$$
is Borel. It turns out that $A$ is actually closed in $\Homeo(X)$. To prove this, assume that $\varphi_i \in A$ converges to some $\varphi \in \Homeo(X)$, and let $\mu_i$ be $\varphi_i$-invariant measures such that $\mu_i(U_j) \in I_j$ for all $j \in \{0,\ldots,n\}$. Since $\Prob(X)$ is compact, we may assume that $\mu_i$ converges to some $\mu \in \Prob(X)$. For any clopen $U$ of $X$, $\varphi(U)$ is clopen, hence $\mu(\varphi(U))=\lim_i \mu_i(\varphi(U))$; but $\varphi(U)=\varphi_i(U)$ for all $i$ large enough, from which we obtain the equality
$$\mu(\varphi(U))= \lim_i \mu_i(\varphi_i(U))=\mu(U) \ .$$
This proves that $\mu \in K_\varphi$. Since for all $j$ we have $\mu(U_j)=\lim_i \mu_i(U_j)$ we also have that $\mu(U_j) \in I_j$, so $\varphi \in A$.
\end{proof}

\begin{lemma}
Let $X$ be a Cantor space. Then the set $Dyn(X)$ of all dynamical simplices on $X$ is a Borel subset of $\mcF(\Prob(X))$.
\end{lemma}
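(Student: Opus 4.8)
The plan is to realize $Dyn(X)$ as a countable Boolean combination of Borel subsets of $\mcF(\Prob(X))$, one for each clause in the definition of a dynamical simplex. Since every member of $\mcF(\Prob(X))$ is by definition a nonempty closed --- hence compact, as $\Prob(X)$ is compact --- subset, the first clause is automatic, and I am left to show that each of ``$K$ is convex'', ``every element of $K$ is atomless'', ``every element of $K$ has full support'' and ``$K$ has the Glasner--Weiss property'' cuts out a Borel set. Two elementary tools will be used throughout. The first is that the membership relation $\{(K,\nu)\in\mcF(\Prob(X))\times\Prob(X):\nu\in K\}$ is Borel (a standard feature of the Effros Borel structure, see \cite{Kechris1995}), so that $\{K:h(K)\in K\}$ is Borel for every Borel map $h\colon\mcF(\Prob(X))\to\Prob(X)$. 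The second is that, for a fixed clopen $U$, the functions $K\mapsto\sup_{\mu\in K}\mu(U)$ and $K\mapsto\inf_{\mu\in K}\mu(U)$ are Borel: density of $\{\mu_n(K)\}$ in $K$ together with continuity of $\mu\mapsto\mu(U)$ gives $\sup_{\mu\in K}\mu(U)=\sup_n\mu_n(K)(U)$ and $\inf_{\mu\in K}\mu(U)=\inf_n\mu_n(K)(U)$, which are countable suprema (resp.\ infima) of the Borel maps $K\mapsto\mu_n(K)(U)$.

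With these in hand, three of the four clauses are pure bookkeeping. For convexity I would use that a closed subset of $\Prob(X)$ is convex if and only if it is stable under midpoints, and that a closed set which is midpoint-stable on a dense subset is midpoint-stable; hence $\{K:K\text{ convex}\}=\bigcap_{m,n}\{K:\tfrac12(\mu_m(K)+\mu_n(K))\in K\}$, which is Borel by the first tool since $K\mapsto\tfrac12(\mu_m(K)+\mu_n(K))$ is Borel. For full support I would use that $\mu$ has full support if and only if $\mu(V)>0$ for every nonempty clopen $V$ (clopen sets being a basis), so $\{K:\text{every }\mu\in K\text{ has full support}\}=\bigcap_V\{K:\inf_n\mu_n(K)(V)>0\}$, a countable intersection of Borel sets. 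For the Glasner--Weiss property I would translate the clause verbatim: for clopen $U,V$, the hypothesis ``$\mu(U)<\mu(V)$ for all $\mu\in K$'' is equivalent to $\inf_n(\mu_n(K)(V)-\mu_n(K)(U))>0$, and for a clopen $W\subseteq V$ the conclusion ``$\mu(W)=\mu(U)$ for all $\mu\in K$'' is equivalent to $\sup_n|\mu_n(K)(W)-\mu_n(K)(U)|=0$; the set of $K$ with the property is then $\bigcap_{U,V}\big(B_{U,V}\cup\bigcup_{W\subseteq V}C_{U,V,W}\big)$, where $B_{U,V}=\{K:\inf_n(\mu_n(K)(V)-\mu_n(K)(U))\le 0\}$ and $C_{U,V,W}=\{K:\sup_n|\mu_n(K)(W)-\mu_n(K)(U)|=0\}$ are Borel by the second tool, so the whole set is Borel.

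The one clause calling for a genuine argument, and the step I expect to be the main obstacle, is atomlessness. The natural description --- $\mu$ is atomless if and only if for every $m$ there is a finite clopen partition of $X$ all of whose pieces have $\mu$-measure less than $1/m$ --- produces a partition depending on $\mu$, and a bare quantifier over $\mu\in K$ is not directly Borel. The remedy is to use compactness of $K$ to make the partition uniform: I would prove that every element of $K$ is atomless if and only if for each $m$ there is a single finite clopen partition $(V_1,\dots,V_r)$ of $X$ with $\mu(V_i)<1/m$ for all $i$ and all $\mu\in K$. For the nontrivial implication one attaches to each $\mu\in K$ a partition witnessing atomlessness of $\mu$ at scale $1/m$, observes that its finitely many values stay below $1/m$ on a neighbourhood of $\mu$ in $\Prob(X)$, extracts a finite subcover of $K$, and takes the common refinement of the finitely many partitions so obtained --- its pieces then have measure less than $1/m$ at every point of $K$. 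Granting this, $\{K:\text{every }\mu\in K\text{ is atomless}\}=\bigcap_m\bigcup_{(V_1,\dots,V_r)}\bigcap_{i\le r}\{K:\sup_n\mu_n(K)(V_i)<1/m\}$, where $(V_1,\dots,V_r)$ ranges over the countable set of finite clopen partitions of $X$; each innermost set is Borel by the second tool, so this set is Borel. Intersecting the four Borel sets produced shows $Dyn(X)$ is Borel. Apart from the compactness argument for atomlessness, nothing beyond manipulating countable unions and intersections of the two basic kinds of Borel set appearing above is needed.
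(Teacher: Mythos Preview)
Your proof is correct and follows essentially the same strategy as the paper's: decompose the definition into its four clauses and verify that each cuts out a Borel set using the selectors $\mu_n$. The only noteworthy difference is your handling of atomlessness---you give a self-contained compactness argument producing a single clopen partition that works uniformly for all $\mu\in K$, whereas the paper first establishes convexity, full support and the Glasner--Weiss property and then leverages those to reduce atomlessness to the simpler Borel condition $\forall\varepsilon\,\forall U\,\exists V\subseteq U\ \forall n\ \mu_n(K)(V)\le\varepsilon$; your route is slightly longer but more transparent and does not depend on the interaction between the clauses.
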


\begin{proof}
Fix a distance $d$ inducing the topology of $\Prob(X)$. Then $K$ is convex if, and only if, it satisfies the following condition:
$$\forall \varepsilon \in\Q^+ \ \forall n,m \in \N \ \exists p \in \N \ d\left(\frac{\mu_n(K) + \mu_m(K)}{2},\mu_p(K)\right) < \varepsilon\ .  $$
This shows that being convex is a Borel condition.

Saying that all elements of $K$ have full support is equivalent (by compactness of $K$) to stating that 
$$\forall U \in \Clopen(X) \setminus \{\emptyset\} \ \exists \varepsilon \in \Q^+ \  \forall n \in \N \  \mu_n(K)(U) \ge \varepsilon\ . $$
Since there are countably many clopen subsets of $X$ this is Borel.

Next we prove that the Glasner--Weiss property is Borel. Indeed, a Borel statement equivalent to this property is the assertion that, for all clopen $U,V$ and all $\varepsilon >0$, either there is some $n$ such that $\mu_n(K)(U) \ge \mu_n(K)(V)- \varepsilon$ or 
$$ \exists W \in \Clopen(X) \ W \subset V \text{ and } \forall n \in \N \ \mu_n(K)(U)= \mu_n(K)(W) \ .  $$
Above we are implicitly using compactness of $K$ and continuity of the maps $\mu \mapsto \mu(U)$ to deduce that the condition $\mu(U)< \mu(V)$ for all $\mu \in K$ is equivalent to saying that there is some $\varepsilon >0$ such that $\mu(U) < \mu(V) - \varepsilon$ for all $\mu \in K$.

Assuming that $K$ is convex, all its elements have full support and $K$ has the Glasner--Weiss property, the fact that all elements of $K$ are atomless is equivalent to the statement that
$$\forall \varepsilon \in \Q^+ \ \forall U \in \Clopen(X) \ \exists V \in \Clopen(X) \ \ V\subseteq U \text{ and } \forall n \ \mu_n(K)(V) \le \varepsilon \ .  $$

\end{proof}

\begin{lemma}
let $X$ be a Cantor space. The space of divisible subsets is Borel in $\mcF(\Prob(X))$.
\end{lemma}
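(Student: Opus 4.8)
The plan is to follow the same template as the proof that $Dyn(X)$ is Borel: use the fixed sequence of Borel selectors $\mu_k \colon \mcF(\Prob(X)) \to \Prob(X)$ with $\{\mu_k(K)\}$ dense in $K$, together with the countability of $\Clopen(X)$, to express divisibility as a countable combination of manifestly Borel conditions.

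First I would carry out the \emph{density reduction}. For fixed clopen sets $U,V$ and a fixed integer $n \in \N^*$, the function $\mu \mapsto \mu(V) - \frac{1}{n}\mu(U)$ is continuous on $\Prob(X)$, so its zero set is closed; since $\{\mu_k(K) \colon k \in \N\}$ is dense in $K$, this zero set contains $K$ if and only if it contains every $\mu_k(K)$. Hence ``$\mu(V) = \frac{1}{n}\mu(U)$ for all $\mu \in K$'' is equivalent to ``$\mu_k(K)(V) = \frac{1}{n}\mu_k(K)(U)$ for all $k \in \N$'', and therefore $K$ is divisible if and only if
\[
\forall U \in \Clopen(X)\ \forall n \in \N^*\ \exists V \in \Clopen(X)\ \Bigl(V \subseteq U\ \text{and}\ \forall k \in \N\ \mu_k(K)(V) = \frac{1}{n}\mu_k(K)(U)\Bigr).
\]

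Next I would observe that, for fixed $U,V,n,k$, the set $\{K \colon \mu_k(K)(V) = \frac{1}{n}\mu_k(K)(U)\}$ is Borel: the map $K \mapsto \mu_k(K)$ is Borel by the choice of the selectors, the evaluations $\mu \mapsto \mu(U)$ and $\mu \mapsto \mu(V)$ are continuous, so $K \mapsto \mu_k(K)(V) - \frac{1}{n}\mu_k(K)(U)$ is a Borel real-valued map and the set in question is the preimage of $\{0\}$. Since $\Clopen(X)$ and $\N$ are countable, the displayed formula exhibits the set of divisible closed subsets as a countable intersection (over $U$ and $n$) of countable unions (over $V$) of countable intersections (over $k$) of Borel sets, hence it is Borel. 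Combined with the previous lemma, this also shows that the set of divisible dynamical simplices is Borel.

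The argument is entirely routine; the only step that needs a moment's thought is the density reduction in the first paragraph, which is precisely what lets us replace the uncountable quantifier ``for all $\mu \in K$'' by a countable one. I do not anticipate any genuine obstacle.
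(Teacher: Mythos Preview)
Your proof is correct and follows exactly the same approach as the paper: rewrite divisibility as the countable formula
\[
\forall n \in \N^* \ \forall U \in \Clopen(X)\ \exists V \in \Clopen(X)\ \bigl(V \subseteq U \text{ and } \forall k\ \mu_k(K)(V)=\tfrac{1}{n}\mu_k(K)(U)\bigr),
\]
using the density of the selectors $\mu_k$ to replace the quantifier over $K$ by one over $\N$. You supply more justification for the density reduction and the Borelness of each basic condition than the paper does, but the argument is identical.
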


\begin{proof}
Simply note that $K \in \mcF(\Prob(X))$ is divisible if and only if 
$$\forall n \in \N^* \ \forall U \in \Clopen(X) \exists V \in \Clopen(X) \ V\subset U \text{ and } \forall i \ \mu_i(K)(V) = \frac{1}{n}\mu_i(K)(U) \ .  $$
\end{proof}

We recall some definitions given in the introduction.

\begin{defn}
Let $X$ be a Cantor space. We say that two dynamical simplices $K,L$ on $X$ are \emph{isomorphic} if there exists some $g \in \Homeo(X)$ such that 
$g_*K=L$. 
\end{defn}

\begin{defn}
Let $X$ be a Cantor space, and $\varphi,\psi$ be two homeomorphisms of $X$. Denote by $R_\varphi, R_\psi$ the equivalence relations corresponding to the orbit partitions associated to $\varphi, \psi$. We say that $\varphi$ and $\psi$ are \emph{orbit equivalent} if there exists some $g \in \Homeo(X)$ such that
$$\forall x,y \in X (x R_\varphi y ) \Leftrightarrow (g(x) R_\psi g(y) )\ . $$
\end{defn}

\begin{prop}
The relations of orbit equivalence of minimal homeomorphisms and isomorphism of dynamical simplices are Borel bireducible.
\end{prop}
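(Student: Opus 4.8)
The plan is to exhibit Borel reductions in both directions, using the Giordano--Putnam--Skau theorem as the dictionary between the two relations.

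\textit{Orbit equivalence reduces to isomorphism of dynamical simplices.} Consider the map $\varphi \mapsto K_\varphi$ from $\Min(X)$ to $\mcF(\Prob(X))$, which is Borel by the preceding lemmas. When $\varphi$ is minimal, $K_\varphi$ is a dynamical simplex: it is nonempty, compact and convex; each of its elements has full support (its support is a nonempty closed invariant set, hence all of $X$) and is atomless (a minimal homeomorphism of an infinite Cantor space has no finite orbit, so an atom would force infinite total mass); and $K_\varphi$ has the Glasner--Weiss property by Theorem \ref{t:glasnerweiss}(1). Thus the map takes values in $Dyn(X)$. By Giordano--Putnam--Skau, $\varphi$ and $\psi$ are orbit equivalent if and only if $g_* K_\varphi = K_\psi$ for some $g \in \Homeo(X)$, i.e.\ if and only if $K_\varphi$ and $K_\psi$ are isomorphic as dynamical simplices; so $\varphi \mapsto K_\varphi$ is a Borel reduction of orbit equivalence of minimal homeomorphisms to isomorphism of dynamical simplices.

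\textit{Isomorphism reduces to orbit equivalence.} Here one must realize a given dynamical simplex as the set of invariant measures of a minimal homeomorphism, Borel-uniformly in the simplex. I claim that the construction of \cite{Ibarlucia2016} (or of \cite{Melleray2018} in the non-divisible case, or the one in the proof of Theorem \ref{t:Toeplitz}) can be run Borel-measurably in $K$, producing a Borel map $K \mapsto \varphi_K$ from $Dyn(X)$ into $\Min(X)$ with $K_{\varphi_K} = K$. One cannot simply invoke a measurable-uniformization theorem for this, since the fibre $\{\varphi \colon K_\varphi = K\}$ is uncountable (so Luzin--Novikov does not apply) and Jankov--von Neumann would only yield a $\sigma(\Sigma^1_1)$-measurable selection; an explicit Borel construction is needed. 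Granting the claim, a second application of Giordano--Putnam--Skau gives that $K$ and $L$ are isomorphic dynamical simplices if and only if $g_* K_{\varphi_K} = K_{\varphi_L}$ for some $g$, that is, if and only if $\varphi_K$ and $\varphi_L$ are orbit equivalent; hence $K \mapsto \varphi_K$ is a Borel reduction and the two relations are Borel bireducible.

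\textit{Borelness of $K \mapsto \varphi_K$.} Fix an enumeration $(A_k)_{k \in \N}$ of $\Clopen(X)$ and recall the Borel selectors $\mu_n$ with $\{\mu_n(K)\}$ dense in $K$. The set of KR-partitions of $X$ is countable (a KR-partition is a finite ordered clopen partition), hence standard Borel. The construction builds recursively a sequence $\mcT_0, \mcT_1, \ldots$ of KR-partitions, where $\mcT_{n+1}$ is obtained from $\mcT_n$ by a number of cutting-and-stacking steps that is itself determined by the data already produced; each such step reduces to choosing one or finitely many clopen sets subject to conditions that, for fixed $K$, are Borel in $K$ and always satisfiable --- for instance ``pick a clopen $W \subset V$ with $W \sim_K U$'', available by the Glasner--Weiss property, or ``pick a clopen $W \subset U$ with $\mu(W) = \frac{1}{m}\mu(U)$ for all $\mu \in K$'', available by divisibility --- the relevant conditions being testable through the $\mu_n(K)$. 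Enumerating uniformly the pairs $(U,V)$ to be treated (for the condition that every relation $U\sim_K V$ is eventually reflected in some $\mcT_n$) and always selecting the admissible clopen set of least index turns each step into a Borel function of $(K,\mcT_n)$; by induction $K \mapsto \mcT_n(K)$ is Borel for every $n$. Finally $\varphi_K$ is the unique homeomorphism extending all the partial automorphisms $\varphi_{\mcT_n(K)}$; for each clopen $A$ the value $\varphi_K(A)$ is determined by $\mcT_n(K)$ for all large $n$, so $K \mapsto \varphi_K(A)$ is Borel, hence $K \mapsto \varphi_K$ is Borel as a map into $\Homeo(X)$, takes values in $\Min(X)$, and satisfies $K_{\varphi_K}=K$ by the cited results.

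The main obstacle is precisely this last step: one must revisit the cutting-and-stacking construction and verify, step by step, that every choice it makes ranges over a countable family of clopen sets carved out by conditions Borel in $K$ and never empty. The non-trivial ingredients ensuring non-emptiness are exactly the Glasner--Weiss property (providing clopen sets of prescribed equal measures) and divisibility (providing clopen subsets of measure $\frac{1}{m}$ of a given one); everything else is bookkeeping of the same flavour as in the earlier lemmas of this section.
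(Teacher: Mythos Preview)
Your proposal is correct and follows essentially the same route as the paper: use $\varphi\mapsto K_\varphi$ together with Giordano--Putnam--Skau for one direction, and a Borel coding of the cutting-and-stacking construction of \cite{Ibarlucia2016} together with Giordano--Putnam--Skau for the other. The paper's own proof is terser on the Borelness of $K\mapsto\varphi_K$ (it simply asserts that at each step one picks the first witness to a Borel condition), whereas you spell this out more carefully; note however that for the statement at hand divisibility is not needed, so the relevant construction is that of \cite{Ibarlucia2016} and the only non-emptiness ingredient required is the Glasner--Weiss property.
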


\begin{proof}
We already know that $\varphi \mapsto K_\varphi$ is Borel. The Giordano--Putnam--Skau theorem recalled in the introduction is exactly the statement that two minimal homeomorphisms are orbit equivalent if and only if $K_\varphi$ and $K_\psi$ are isomorphic dynamical simplices.
Hence $\varphi \mapsto K_\varphi$ is a Borel reduction from OE to isomorphism of dynamical simplices.

Conversely, the construction in \cite{Ibarlucia2016} associates a minimal homeomorphism to any dynamical simplex; this construction can be turned into a Borel map $K \mapsto \varphi_K$, since it involves building a neighborhood basis of $\varphi_K$ via an inductive construction where at each step one can simply choose the first witness that a certain Borel condition is satisfied. Using the Giordano--Putnam--Skau theorem again, we see that $K \mapsto \varphi_K$ reduces isomorphism of dynamical simplices to OE.
\end{proof}

\begin{remark*} In the construction of \cite{Ibarlucia2016}, the homeomorphisms $\varphi_K$ are actually saturated (i.e.~the topological full group of $\varphi_K$ is dense in its full group), so that $\varphi_K$ and $\varphi_L$ are orbit equivalent iff they are strong orbit equivalent (as we focus on orbit equivalence here, we do not give details). Thus the argument above also shows that isomorphism of dynamical simplices Borel reduces to strong orbit equivalence of minimal homeomorphisms. Since strong orbit equivalence is classifiable by countable structures, the main result of the next section will also establish that strong orbit equivalence of Toeplitz subshifts is $S_\infty$-universal. In an attempt at brevity, we will not elaborate more on this.
\end{remark*}

\section{Reducing homeomorphism of $0$-dimensional compact metrizable spaces to isomorphism of divisible dynamical simplices }
In this section, we explain the construction of \cite{Melleray2018} and how to apply it in order to build a Borel reduction from the relation of homeomorphism between $0$-dimensional compact metric spaces to the relation of isomorphism of divisible dynamical simplices.

Given a Choquet simplex $Q$, we denote by $\Aff(Q)$ the set of continuous, real-valued affine functions on $Q$; for $F \subseteq \Aff(Q)$ we denote by $F^+$ the elements of $F$ taking only positive values and by $F_1^+$ the set of elements of $F^+$ having all their values smaller than $1$ (that is, the intersection of $F^+$ with the unit ball for the supremum norm).

\begin{defn}
A subset $F$ of $\Aff(Q)$ is said to have the \emph{finite sum property} if for any $f_1,\ldots,f_n$, $g_1,\ldots,g_m \in F^+$ such that 
$\sum_{i=1}^n f_i= \sum_{j=1}^m g_j$ one can find $h_{i,j} \in F^+$ satisfying
$$\forall i \in \{1,\ldots,n\} \ f_i = \sum_{j=1}^m h_{i,j} \quad \text{ and} \quad \forall j \in \{1,\ldots,m\} g_j=\sum_{i=1}^n h_{i,j}\ .$$
\end{defn}

Whenever $Q$ is a Choquet simplex, $\Aff(Q)$ itself satisfies the finite sum property (see for instance \cite{Lussky1981}). Below we will make use of a specific example, so no knowledge of the theory of Choquet simplices is required. Assume that $X$ is a $0$-dimensional compact metrisable space; then $X$ is naturally identified with the extreme boundary of the Choquet simplex $\Prob(X)$, and every continuous function of $X$ extends uniquely to a continuous affine function on $\Prob(X)$. Denote by $F(X)$ the set of continuous affine functions on $\Prob(X)$ whose restriction to $X$ takes finitely many rational values; it is  straightforward to check that $F(X)$ is a countable dense subset of $\Prob(X)$ containing the constant functions and satisfying the finite sum property. Denote $G(X)={F(X)}_1^+$.

We note now some key properties of $G(X)$, which are easy to establish.

\begin{prop}\label{p:propsG(X)}
Let $K$ be a Cantor space, and $X$ be a closed subset of $K$. Then:
\begin{enumerate}
\item $F(X)$ satisfies the finite sum property.
\item For any $f_1,f_2 \in G(K)$ and any $f \in G(X)$ such that ${f_1}_{|X} \le f \le {f_2}_{|X}$, there exists $g \in G(K)$ extending $f$ and such that $f_1 \le g \le f_2$.
\item For any $f_1,\ldots,f \in G(X)$ and any $f \in G(K)$ such that 
$f_{|X} = \sum_{i=1}^n f_i $, there exists $g_1,\ldots,g_n \in G(K)$ extending $f_1,\ldots,f_n$ and such that $f=\sum_{i=1}^n g_i$.
\end{enumerate}
\end{prop}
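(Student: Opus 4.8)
\textbf{Proof plan for Proposition \ref{p:propsG(X)}.}

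The plan is to treat the three items separately, with the first being essentially a citation and the other two being extension/interpolation arguments that exploit the concrete description of $F(X)$ as continuous affine functions on $\Prob(X)$ whose restriction to $X$ takes finitely many rational values. Throughout I will identify a function in $F(X)$ (respectively $F(K)$) with its restriction to the extreme boundary $X$ (respectively $K$), since the extension to an affine continuous function on the whole simplex is unique; thus all the work takes place with genuine continuous functions on the $0$-dimensional spaces $X$ and $K$, and the affine extensions come for free. For item (1), one simply invokes the fact recalled before the proposition that $\Aff(\Prob(X))$ has the finite sum property (by \cite{Lussky1981}, since $\Prob(X)$ is a Choquet simplex), together with the observation that if $f_1,\ldots,f_n,g_1,\ldots,g_m \in F(X)^+$ satisfy $\sum f_i = \sum g_j$, then the functions $h_{i,j}$ produced by the finite sum property for $\Aff(\Prob(X))$ can be chosen inside $F(X)$: indeed one can build them by hand on $X$ by a finite bookkeeping argument, splitting $f_i$ on each of the finitely many clopen pieces where all the data are locally constant, so that each $h_{i,j}$ is again a finite-rational-valued function, hence in $F(X)$.

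For item (2), I start from $f_1,f_2 \in G(K)$ and $f \in G(X)$ with ${f_1}_{|X} \le f \le {f_2}_{|X}$, and I want $g \in G(K)$ extending $f$ with $f_1 \le g \le f_2$. The idea is to use $0$-dimensionality of $K$ to find a clopen neighbourhood $V$ of $X$ in $K$, together with a continuous retraction-like gluing: since $f$, ${f_1}_{|X}$, ${f_2}_{|X}$ all take finitely many rational values and ${f_1}_{|X} \le f \le {f_2}_{|X}$, one can partition $X$ into finitely many clopen (in $X$) pieces on which $f$, ${f_1}_{|X}$, ${f_2}_{|X}$ are all constant, thicken each such piece to a clopen subset of $K$ (shrinking so that on that clopen set $f_1$ and $f_2$ still straddle the relevant constant value of $f$, which is possible by continuity after passing to a finer clopen partition of $K$), and define $g$ to equal the appropriate constant value of $f$ on (the trace in a small clopen neighbourhood of $X$ of) each of these pieces, while setting $g$ equal to, say, $f_1$ — or any convex combination of $f_1$ and $f_2$ — on the complementary clopen set away from $X$. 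One must check that the resulting $g$ is continuous (it is, being locally constant near $X$ and continuous away from $X$, provided the two definitions agree on the overlap, which is arranged by the clopen thickening), takes finitely many rational values (the values of $f$ plus whatever $f_1$ contributes — here one should instead take $g$ constant-rational everywhere off $X$, e.g. replace $f_1$ on the far clopen set by a rational constant between $0$ and $1$ chosen to keep $f_1 \le g \le f_2$ there), lies in $G(K) = F(K)_1^+$, extends $f$, and satisfies $f_1 \le g \le f_2$ on all of $K$. The delicate point is ensuring $f_1 \le g \le f_2$ holds not just on $X$ but on the whole clopen neighbourhood, which is why one must first refine the clopen partition of $K$ so that on each piece meeting $X$ the oscillations of $f_1$ and $f_2$ are small enough to contain the target constant value strictly inside $[0,1]$.

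For item (3), given $f_1,\ldots,f_n \in G(X)$ and $f \in G(K)$ with $f_{|X} = \sum_{i=1}^n f_i$, I want $g_1,\ldots,g_n \in G(K)$ extending the $f_i$ with $f = \sum g_i$. The natural approach is: since $f$ and all $f_i$ are finitely-valued and rational, partition $K$ into finitely many clopen sets on which $f$ is constant, and refine so that the trace of this partition on $X$ also refines the partitions on which each $f_i$ is constant; on a clopen piece $U$ of $K$ meeting $X$, say $f \equiv c$ on $U$ and $f_i \equiv c_i$ on $U \cap X$ with $\sum c_i = c$, define $g_i \equiv c_i$ on a suitably small clopen subcell of $U$ containing $U \cap X$; on clopen pieces of $K$ disjoint from $X$, split the constant value $c$ of $f$ there arbitrarily into $n$ nonnegative rationals summing to $c$ and each at most $1$ (possible since $0 \le c \le 1$: e.g. put all the mass on $g_1$ if $c \le 1$, or distribute as needed). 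Then each $g_i$ is locally constant with finitely many rational values in $[0,1]$, hence in $G(K)$; it extends $f_i$; and $\sum g_i = f$ by construction. I expect the main obstacle across all three items to be purely bookkeeping: choosing the clopen refinements of $K$ finely enough that the locally-constant functions one writes down near $X$ both extend the given data and satisfy the required pointwise inequalities or sum identity on all of $K$, while staying inside the unit ball and keeping finitely many rational values. No deep Choquet-theoretic input is needed beyond item (1)'s appeal to \cite{Lussky1981}; the hypothesis that $X$ and $K$ are $0$-dimensional is what makes the extension problems trivial, since continuous functions are approximated by (indeed, the relevant ones are literally) locally constant ones.
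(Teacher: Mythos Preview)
Your approach is essentially the paper's: for each item one passes to a common clopen partition on which all the data are constant, and then the problem reduces to elementary arithmetic with rationals. A couple of points where you are making life harder than necessary:

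\textbf{Item (2).} You start from a clopen partition of $X$ and then worry about ``thickening'' into $K$ while controlling the oscillation of $f_1,f_2$. This is unnecessary: since $f_1,f_2\in G(K)\subset F(K)$, they already take only finitely many rational values on all of $K$, so you can begin with a clopen partition $U_1,\dots,U_N$ of $K$ on which $f_1,f_2$ are \emph{constant}, and refine it so that $f$ is constant on each $U_l\cap X$. On a piece $U_l$ with $U_l\cap X\ne\emptyset$, $f_1\equiv a_l$, $f_2\equiv b_l$, $f\equiv c_l$ on $U_l\cap X$ with $a_l\le c_l\le b_l$, so set $g\equiv c_l$ on $U_l$; on a piece missing $X$ set $g\equiv f_2$ (or $f_1$). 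There is no oscillation to control and no need to hunt for an auxiliary rational constant: $f_1$ and $f_2$ are themselves rational-valued. This is exactly what the paper does.

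\textbf{Item (3).} Your plan is right and matches the paper's (which just points to item (2) and an analogous argument). One small slip: on clopen pieces of $K$ disjoint from $X$ you propose to split the constant value $c$ of $f$ into ``nonnegative rationals'', e.g.\ putting all the mass on $g_1$. That fails, since $G(K)=F(K)_1^+$ consists of functions with values strictly in $(0,1)$; use instead any splitting into $n$ strictly positive rationals summing to $c$ (e.g.\ $c/n$ each).

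\textbf{Item (1).} The appeal to \cite{Lussky1981} is superfluous here: the direct combinatorial argument you sketch at the end of your paragraph (partition into clopens, split rationals piece by piece) is all that is needed, and is what the paper does.
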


\begin{proof} The proofs are easy so we try not to belabor the point.
\begin{enumerate}
\item Pick $f_1,\ldots,f_n$, $g_1,\ldots,g_m \in F(X)^+$ such that $\sum_{i=1}^n f_i = \sum_{j=1}^m g_j$. We may find a partition of $K$ by clopen sets $U_1,\ldots,U_N$ such that each $f_i$ and each $g_j$ are constant on $U_l$ for all $l$. Picking $x_1,\ldots,x_l  \in U_l$, it is an easy task to find rationals $q_{i,j}(l) >0$ such that for all $j$ one has $\sum_{i=1}^n q_{i,j}(l)=g_j(x_l)$, and for all $i$ $\sum_{j=1}^m q_{i,j}(l)=f_i(x_l)$. Then setting $h_{i,j}(x)=q_{i,j}(l)$ for each $x \in U_l$ works. 
\item We may find finitely many disjoint clopen sets $U_1,\ldots,U_N$ covering $K$ such that $f_1,f_2$ are constant on each $U_l$ and $f$ is constant on each $U_l \cap X$. For any $l$ such that $U_l \cap X \ne \emptyset$, we pick $x_l \in U_l \cap X$ and define $g$ to be equal to $f(x_l)$ on $U_l$. For any other $l$ we define $g$ to be equal to $f_2$ on $U_l$.
\item This follows easily from the previous fact (and is proved in exactly the same way as \cite{Melleray2018}*{Lemma~5.11}).
\end{enumerate}

\end{proof}

The construction of \cite{Melleray2018} takes as input a nonempty metrizable Choquet simplex $Q$, along with a countable, dense $\Q$-vector subspace $F$ of $\Aff(Q)$ containing $1$ and having the finite sum property; and yields as output a dynamical simplex $S=\{\mu_q\}_{q \in Q}$ affinely homeomorphic to $Q$, and (denoting by $K$ the underlying Cantor space of $S$) such that 
$$\{q \mapsto \mu_q(A)\}_{A \in \Clopen(K)}=F_1^+ \ .$$
The idea here is to start from a nonempty metrizable $0$-dimensional compact space $X$, and to apply that construction to $(\Prob(X),G(X))$ in order to produce a dynamical simplex $S(X)$. If $X$ and $X'$ are homeomorphic then $S(X)$ and $S(X')$ will be isomorphic; and conversely if $S(X)$ and $S(X')$ are isomorphic then their extreme boundaries are homeomorphic, i.e.~$X$ and $X'$ are homeomorphic. 

We now need to give some more detail on the construction of \cite{Melleray2018}, in order to convince the reader that it has the properties mentioned in the previous paragraph, and that it can be encoded in a Borel way.

Given a nonempty compact metrizable space $X$, a $X$-\emph{structure} is an object of the form $(\mcA,(\mu_x)_{x \in X})$ such that

\begin{itemize}
\item $\mcA$ is Boolean algebra .
\item Each $\mu_x$ is a probability measure on $\mcA$. 
\end{itemize} 

We say that the structure is \emph{finite} (resp. countable) if its underlying Boolean algebra is finite (resp. countable).

Below we briefly discuss Fra\"iss\'e classes and limits. We refer to \cite{Melleray2018} and the references therein for more details about Fra\"iss\'e classes. 
Fix a nonempty compact metrizable set $X$ for the duration of our discussion of Fra\"iss\'e classes.

\begin{defn}
The \emph{age} of a $X$-structure $A$ is the class of all finite $X$-structures which embed in $A$.
\end{defn}

A class $\mcL$ of finite $X$-structures is a \emph{Fra\"iss\'e class} if:
\begin{itemize}
\item It contains only countably many elements up to isomorphism.
\item For any $A,B \in \mcL$ there exists $C \in \mcL$ such that both $A$ and $B$ embed in $\mcL$.
\item Any substructure of an element of $\mcL$ also belongs to $\mcL$.
\item For any $A,B,C \in \mcL$ and any embeddings $i \colon \colon A \to B$, $j \colon A \to C$ there exists $D \in \mcL$ and embeddings $i' \circ B \to D$, $j' \circ C \to D$ such that $i' \circ i = j' \circ j$.
\end{itemize}

The last property above, known as the \emph{amalgamation property}, is the strongest and typically hardest to prove. It characterizes \emph{ultrahomogeneous structures}.

\begin{defn}
A $X$-structure $A$ is \emph{ultrahomogeneous} if any partial isomorphism of $A$ with domain a finite substructure extends to an automorphism of $A$.
\end{defn}

\begin{theorem}
The age of a ultrahomogeneous $X$-structure is a Fra\"iss\'e class; conversely, for any Fra\"iss\'e class $\mcL$ there exists a countable ultrahomogeneous $X$-structure whose age is equal to $\mcL$. This structure is unique (up to isomorphism) and is known as the \emph{Fra\"iss\'e limit} of $\mcL$. 
\end{theorem}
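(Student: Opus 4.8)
The plan is to follow the classical proof of Fra\"iss\'e's theorem; the only point specific to $X$-structures is that a Boolean algebra generated by finitely many elements is finite, so that the substructure of an $X$-structure generated by finitely many elements is again a \emph{finite} $X$-structure. This is what keeps the various ``generated'' substructures used below inside the given class.

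For the first assertion, let $A$ be a countable ultrahomogeneous $X$-structure and $\mcL$ its age. Since $A$ is countable it has only countably many finite substructures, so $\mcL$ has only countably many isomorphism types; heredity is immediate; and joint embedding follows by embedding $B,C\in\mcL$ into $A$ and taking the (finite) substructure of $A$ generated by the two images. For amalgamation, given embeddings $i\colon A_0\to B$ and $j\colon A_0\to C$ with $A_0,B,C\in\mcL$, fix embeddings $f\colon B\to A$ and $g\colon C\to A$; then $g\circ j\circ (f\circ i)^{-1}$ is a partial isomorphism of $A$ between the finite substructures $f(i(A_0))$ and $g(j(A_0))$, hence extends to an automorphism $\theta$ of $A$ by ultrahomogeneity. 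Now $\theta\circ f\circ i=g\circ j$, and the substructure of $A$ generated by $\theta(f(B))\cup g(C)$ is a finite member of $\mcL$ amalgamating $B$ and $C$ over $A_0$.

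For the converse, given a Fra\"iss\'e class $\mcL$, I would build an increasing chain $A_0\subseteq A_1\subseteq\cdots$ of members of $\mcL$, using a standard bookkeeping device, so as to ensure the following \emph{extension property}: for every $n$, every $B\in\mcL$ with an embedding $f\colon B\to A_n$, and every $C\in\mcL$ with an embedding $i\colon B\to C$, there are $m\ge n$ and an embedding $g\colon C\to A_m$ with $g\circ i=f$ (modulo the inclusion $A_n\subseteq A_m$); at the relevant stage one simply amalgamates $A_n$ with $C$ over $B$ inside $\mcL$, and, inserting joint-embedding steps, one also arranges that every member of $\mcL$ embeds into some $A_n$. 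The union $M=\bigcup_n A_n$ is a countable $X$-structure: the union of the chain of Boolean algebras is a Boolean algebra, and the measures $\mu_x$ are compatible along the chain, hence define finitely additive probability measures on $M$. Every finite substructure of $M$ lies in some $A_n$, so by heredity the age of $M$ is contained in $\mcL$; conversely every member of $\mcL$ embeds into $M$, so the age of $M$ is exactly $\mcL$. Ultrahomogeneity of $M$ follows from the extension property by the usual back-and-forth: a partial isomorphism between finite substructures is extended one generator at a time along a fixed enumeration of $M$, alternately enlarging domain and range via the extension property (applied to the finite substructure generated by the current piece together with the next generator), and the union of the resulting chain of partial isomorphisms is an automorphism of $M$.

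For uniqueness, I would first observe that a countable ultrahomogeneous $X$-structure with age $\mcL$ automatically satisfies the extension property above: given $f\colon B\to M$ and $i\colon B\to C$ with $C\in\mcL$, embed $C$ into $M$ by some $h$, then use ultrahomogeneity of $M$ to find an automorphism $\theta$ carrying $h(i(B))$ to $f(B)$ so that $\theta\circ h\circ i=f$, so $\theta\circ h$ is the required extension. A back-and-forth between two such structures $M$ and $M'$, using the extension property on each side, then produces an isomorphism $M\cong M'$. The only genuinely nonroutine point throughout is the bookkeeping in the chain construction — enumerating all amalgamation and joint-embedding tasks so that each is eventually handled, and arranging that $A_n$ sits literally inside $A_{n+1}$ — but this is entirely standard.
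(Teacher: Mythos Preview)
Your proof is correct and is essentially the standard argument for Fra\"iss\'e's theorem, with the one context-specific observation (which you rightly isolate at the outset) that a Boolean algebra generated by finitely many elements is finite, so that finitely generated substructures of an $X$-structure are again finite $X$-structures. Note, however, that the paper does not actually prove this theorem: it is stated as background, with a pointer to \cite{Melleray2018} and the references therein, so there is no argument in the paper to compare yours against.
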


Not every structure whose age is a Fra\"iss\'e class is ultrahomogeneous; but among structures whose age is a Fra\"iss\'e class $\mcL$, those which are isomorphic to the Fra\"iss\'e limit of $\mcL$ are easy to recognize.

\begin{theorem}
Assume that $\mcA$ is a $X$-structure whose age is a Fra\"iss\'e class $\mcL$. Then $\mcA$ is isomorphic to the Fra\"iss\'e limit of $\mcL$ if and only if for any finite substructure $\mcB$ of $\mcA$, and any embedding $\alpha$ from $\mcB$ to some $\mcC \in \mcL$, there exists an embedding $\beta \colon \mcC \to \mcA$ such that $\beta \circ \alpha(a)=a$ for all $a \in A$.
\end{theorem}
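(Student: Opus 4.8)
The plan is to run the classical back-and-forth argument of Fra\"iss\'e theory, arranged so that ultrahomogeneity is used at exactly one point. Write $\mathcal{M}$ for the Fra\"iss\'e limit of $\mcL$ provided by the previous theorem (a countable ultrahomogeneous $X$-structure whose age is $\mcL$), and recall that any substructure of an $X$-structure generated by a finite set is itself finite, since a finitely generated Boolean algebra is finite; this is what makes the usual bookkeeping with finite substructures legitimate. The first step is to check that $\mathcal{M}$ itself satisfies the displayed property, which will also dispatch the ($\Rightarrow$) direction. Given a finite substructure $\mcB$ of $\mathcal{M}$ and an embedding $\alpha\colon\mcB\to\mcC$ with $\mcC\in\mcL$, the fact that $\mcC$ lies in the age of $\mathcal{M}$ yields an embedding $\gamma\colon\mcC\to\mathcal{M}$; then $b\mapsto\gamma(\alpha(b))$ is an isomorphism of $\mcB$ onto the finite substructure $\gamma(\alpha(\mcB))$, so its inverse is a partial isomorphism of $\mathcal{M}$ with finite domain, which by ultrahomogeneity extends to an automorphism $\sigma$ of $\mathcal{M}$; then $\beta:=\sigma\circ\gamma$ is an embedding of $\mcC$ into $\mathcal{M}$ with $\beta(\alpha(b))=\sigma(\gamma(\alpha(b)))=b$ for all $b\in\mcB$. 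Since the displayed property is obviously invariant under isomorphism, any $\mcA$ isomorphic to $\mathcal{M}$ has it as well, which is ($\Rightarrow$).

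For ($\Leftarrow$), assume $\mcA$ has age $\mcL$ and satisfies the extension property, and (as one must, the Fra\"iss\'e limit being countable) that $\mcA$ is countable. Fixing enumerations of the underlying sets of $\mcA$ and of $\mathcal{M}$, I would build an increasing chain of partial isomorphisms $f_k$ between finite substructures of $\mcA$ and of $\mathcal{M}$, alternately forcing a prescribed element of $\mcA$ into the domain and a prescribed element of $\mathcal{M}$ into the range, and take $\bigcup_k f_k$ as the desired isomorphism. In the ``forth'' step one is given $f_k\colon\mcB\to\mcB'$, with $\mcB$ a finite substructure of $\mcA$ and $\mcB'$ a finite substructure of $\mathcal{M}$, and wants to put a given $a\in A$ into the domain: let $\mcB_1$ be the finite substructure of $\mcA$ generated by $\mcB\cup\{a\}$ (so $\mcB_1$ lies in the age of $\mcA$, hence in that of $\mathcal{M}$), and apply the extension property of $\mathcal{M}$ to the finite substructure $\mcB'$ and to the embedding $\alpha\colon\mcB'\to\mcB_1$ obtained by composing $f_k^{-1}\colon\mcB'\to\mcB$ with the inclusion $\mcB\hookrightarrow\mcB_1$. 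This returns an embedding $\beta\colon\mcB_1\to\mathcal{M}$ with $\beta\circ\alpha=\mathrm{id}_{\mcB'}$, equivalently $\beta(b)=f_k(b)$ for all $b\in\mcB$; hence $\beta$, viewed as an isomorphism onto its image, is a partial isomorphism extending $f_k$ and having $a$ in its domain, and we set $f_{k+1}=\beta$. The ``back'' step is entirely analogous, this time invoking the extension property of $\mcA$ (and inverting the embedding it produces).

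The only point that genuinely requires care is the verification, in each step, that the embedding handed back by the extension property really does extend the partial isomorphism constructed so far; this is the one-line computation $\beta(b)=\beta\bigl(f_k^{-1}(f_k(b))\bigr)=f_k(b)$, together with the observation that all structures in play stay finite, which the remark on finitely generated Boolean algebras guarantees. Beyond that I foresee no obstacle: the content is the standard Fra\"iss\'e argument, repackaged so that the hypothesis of ultrahomogeneity is used only in the opening step.
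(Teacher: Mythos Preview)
Your argument is correct and is precisely the standard back-and-forth proof of this classical Fra\"iss\'e-theoretic fact. Note that the paper does not supply a proof of this theorem at all: it is quoted as background material, with a reference to \cite{Melleray2018} and the literature therein, so there is no ``paper's own proof'' to compare against. Your remark that countability of $\mcA$ must be assumed (or deduced) for the $(\Leftarrow)$ direction is well taken; the paper's statement leaves this implicit, but it is of course needed for the back-and-forth to exhaust $\mcA$, and in the paper's applications the structures in play are countable.
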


Now we can explain the construction of \cite{Melleray2018}. Given a $0$-dimensional compact metrizable space $X$, we may consider the class $\mcL_X$ of all finite $X$-structures ($\mcA,(\mu_x)_{x \in X})$ such that for each nonzero $a \in \mcA$, $x \mapsto \mu_x(a)$ belongs to $G(X)$.

It follows from the arguments of \cite{Melleray2018} and the finite sum property of $G(X)$ that $\mcL_X$ is a Fra\"iss\'e class.  Its limit is of the form $(\mcA,(\mu_x)_{x \in X})$, where $\mcA$ is an infinite countable atomless Boolean algebra. Denoting by $C$ the Stone dual of $\mcA$, it is further established in \cite{Melleray2018} that the map $x \mapsto \mu_x$ is a continuous embedding of $X$ into $\Prob(C)$, that the closed convex hull of $\{\mu_x\}_{x \in X}$ is a (divisible) dynamical simplex and that $G(X)$ coincides with the set of all maps $x \mapsto \mu_x(a)$ as $a$ ranges over all nonzero elements of $\mcA$. Further, each $\mu_x$ is an extreme point of that dynamical simplex, from which it follows that its extreme boundary coincides with $\{\mu_x\}_{x \in X}$ (and is thus homeomorphic to $X$).
 We just realized $\Prob(X)$ as a dynamical simplex in $\Prob(C)$, in such a way that $G(X)$ coincides with all maps $x \mapsto \mu_x(U)$ as $U$ runs over all nonempty clopen subsets of $X$. 

Fix a Cantor space $K$, and apply the procedure we just described to $K$. This yields a Cantor space $C$, and a continuous map $k \mapsto \mu_k$ from $K$ to $\Prob(C)$ such that the closed convex hull of $\{\mu_k \colon k \in K\}$ is a divisible dynamical simplex with extreme boundary homeomorphic to $K$.

\begin{defn}
For any nonempty closed subset $X$ of $K$, denote by $S(X)$ the closed convex hull of $\{\mu_x \colon x \in X\}$.
\end{defn}

The map $X \mapsto S(X)$ is a continuous map from $\mcF(X)$ to $\mcF(\Prob(C))$.

\begin{prop} The following facts hold.
\begin{enumerate}
\item\label{p:ultrahomogeneous} For any closed nonempty subset $X$ of $K$, the $X$-structure $(\Clopen(C),(\mu_x)_{x \in X})$ is ultrahomogeneous.
\item\label{p:divisiblesimplex} For any closed nonempty subset $X$ of $K$, $S(X)$ is a divisible dynamical simplex.
\item\label{p:reduction} $S(X)$ and $S(X')$ are isomorphic iff $X$ and $X'$ are homeomorphic.
\end{enumerate}
\end{prop}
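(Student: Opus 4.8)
The plan is to prove \ref{p:ultrahomogeneous} first, since it is the crux, and then to read off \ref{p:divisiblesimplex} and \ref{p:reduction} from it together with the results of \cite{Melleray2018} recalled above. I will freely use two elementary consequences of the definitions: the restriction to a closed subset $X\subseteq K$ of any member of $G(K)$ lies in $G(X)$, and, for a homeomorphism $h\colon X\to X'$, composition with $h$ is a bijection of $G(X')$ onto $G(X)$. Write $\mathcal{M}_X=(\Clopen(C),(\mu_x)_{x\in X})$ and $\mathcal{M}_K=(\Clopen(C),(\mu_k)_{k\in K})$; by construction $\mathcal{M}_K$ is the Fra\"iss\'e limit of $\mcL_K$, and $\{k\mapsto\mu_k(a)\colon 0\ne a\in\Clopen(C)\}=G(K)$.

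To prove \ref{p:ultrahomogeneous} I would show that $\mathcal{M}_X$ is the Fra\"iss\'e limit of $\mcL_X$ (which is a Fra\"iss\'e class); by the characterization of Fra\"iss\'e limits recalled above, together with uniqueness of the limit, it then suffices to check that (a) the age of $\mathcal{M}_X$ is $\mcL_X$, and (b) $\mathcal{M}_X$ has the extension property. For (a): a finite substructure of $\mathcal{M}_X$ is $(\mcB,(\mu_x|_\mcB)_{x\in X})$ for a finite subalgebra $\mcB$ of $\Clopen(C)$, and for nonzero $b\in\mcB$ one has $k\mapsto\mu_k(b)\in G(K)$, hence $x\mapsto\mu_x(b)\in G(X)$, so the substructure lies in $\mcL_X$; conversely, any $(\mcB,(\nu_x)_{x\in X})\in\mcL_X$ can be extended to a $K$-structure $(\mcB,(\tilde\nu_k)_{k\in K})\in\mcL_K$ by using Proposition~\ref{p:propsG(X)}(3) to extend the masses of the atoms of $\mcB$ to members of $G(K)$ with the same total, and this $K$-structure embeds into $\mathcal{M}_K$, so $(\mcB,(\nu_x)_{x\in X})$ embeds into $\mathcal{M}_X$. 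For (b): given an embedding $\alpha\colon\mcB\to\mcC$ with $\mcC\in\mcL_X$, view $\mcB$ also as a substructure of $\mathcal{M}_K$ via its $K$-structure $(\mu_k|_\mcB)_{k\in K}$, and extend the $X$-structure of $\mcC$ to a $K$-structure $\tilde{\mcC}\in\mcL_K$ lying over this $\mcB$: whenever an atom $b$ of $\alpha(\mcB)$ refines in $\mcC$ into atoms $c_1,\dots,c_r$ with $\sum_i\mu^\mcC_x(c_i)=\mu_x(b)$ for $x\in X$, Proposition~\ref{p:propsG(X)}(3) (applied to $f=(k\mapsto\mu_k(b))\in G(K)$ and $f_i=(x\mapsto\mu^\mcC_x(c_i))\in G(X)$) yields $g_i\in G(K)$ extending the $f_i$ with $\sum_i g_i=f$, and we set $\mu^{\tilde{\mcC}}_k(c_i)=g_i(k)$. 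Applying the extension property of $\mathcal{M}_K$ to the inclusion $\mcB\hookrightarrow\tilde{\mcC}$ produces an embedding $\tilde{\mcC}\to\mathcal{M}_K$ fixing $\mcB$, and its restriction to the underlying $X$-structures is the desired $\beta\colon\mcC\to\mathcal{M}_X$.

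Once \ref{p:ultrahomogeneous} is in hand, \ref{p:divisiblesimplex} follows from uniqueness of Fra\"iss\'e limits: $\mathcal{M}_X$ is isomorphic to the $X$-structure that the construction of \cite{Melleray2018} produces from $(\Prob(X),G(X))$, so all conclusions of that construction transfer, and in particular $S(X)$ is a divisible dynamical simplex whose extreme boundary is $\{\mu_x\colon x\in X\}$, a copy of $X$. For \ref{p:reduction}, the substantive direction is that a homeomorphism $h\colon X\to X'$ induces an isomorphism of $S(X)$ and $S(X')$: the $X$-structure $(\Clopen(C),(\mu_{h(x)})_{x\in X})$ obtained by transporting $\mathcal{M}_{X'}$ along $h$ has the same (partial) automorphisms as $\mathcal{M}_{X'}$, hence is ultrahomogeneous by \ref{p:ultrahomogeneous} applied to $X'$, and its age is $\mcL_X$ because composition with $h$ sends $G(X')$ onto $G(X)$; by uniqueness it is isomorphic to $\mathcal{M}_X$, and a witnessing Boolean isomorphism of $\Clopen(C)$ is Stone-dual to a homeomorphism $g$ of $C$ with $g_*\mu_x=\mu_{h(x)}$ for all $x\in X$, whence $g_*S(X)=S(X')$. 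Conversely, if $g_*S(X)=S(X')$ for some $g\in\Homeo(C)$ then the affine homeomorphism $g_*$ of $\Prob(C)$ maps the extreme boundary of $S(X)$ onto that of $S(X')$, so by \ref{p:divisiblesimplex} it restricts to a homeomorphism from $\{\mu_x\colon x\in X\}\cong X$ onto $\{\mu_{x'}\colon x'\in X'\}\cong X'$.

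The main obstacle is clause (b) in the proof of \ref{p:ultrahomogeneous}: one must extend an \emph{arbitrary} finite $X$-structure sitting over $\mcB$ to a $K$-structure sitting over the \emph{same} $\mcB$, and this is precisely what the extension and finite-sum properties of $G$ recorded in Proposition~\ref{p:propsG(X)} were designed to make possible. The remaining work --- tracking which atoms of $\mcB$ refine into which atoms of $\mcC$, and checking that the resulting $K$-structure genuinely lies in $\mcL_K$ --- is routine bookkeeping, but it depends essentially on $G(X)$ being closed under the relevant splittings and one-sided extensions from $X$ to $K$.
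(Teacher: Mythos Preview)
Your proof is correct and follows essentially the same approach as the paper: the heart of \ref{p:ultrahomogeneous} is to lift the $X$-structure embedding $\mcB\hookrightarrow\mcC$ to a $K$-structure embedding using Proposition~\ref{p:propsG(X)}(3), and then invoke the Fra\"iss\'e property of $\mathcal{M}_K$---exactly as the paper does. Your treatment is in fact slightly more complete, since you separate out and verify the age computation (a) explicitly, whereas the paper defers this remark to the proof of \ref{p:divisiblesimplex}; and your route to \ref{p:divisiblesimplex} via transfer along the isomorphism with the Fra\"iss\'e limit of $\mcL_X$ is a clean alternative to the paper's direct observation that divisibility of $S(K)$ immediately gives divisibility of $S(X)$.
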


\begin{proof} \eqref{p:ultrahomogeneous} The argument is very similar to arguments of \cite{Melleray2018}. We write it down for the reader's convenience.
  Let $\mcA$ be a finite subalgebra of $\Clopen(C)$, and assume that $\alpha$ is an embedding of $(\mcA,(\mu_x)_{x \in X})$ in a finite $X$-structure $(\mcB,(\nu_x))$. For any atom $a$ of $\mcA$, let $(b_i^a)_{i \in I_a}$ denote the atoms of $\mcB$ which are contained in $\alpha(a)$. Denote by $f_a$ the map $k \mapsto \mu_k(a)$ (defined on the whole $K$), and by $g_b$ the map $x \mapsto \mu_x(b)$ (defined only on $X$). Then we have for any atom $a$ of $\mcA$ that 
$$\forall x \in X \ \sum_{b \in I_a} g_b= f_a\ . $$
By Proposition \ref{p:propsG(X)}, we may extend each $g_b$ to a map $h_b \in G(K)$ such that 
$$\forall k \in K \ \sum_{b \in I_a} h_b(k)= f_a(k) $$
Using the Fra\"iss\'e property of $(\Clopen(X),(\mu_k)_{k \in K})$, we can find clopen subsets $(U_b^a)_{b \in I_a}$ of $\Clopen(C)$ such that 
$a= \bigcup_{b \in I_a} U_b^a$ for any atom $a$ of $\mcA$, and $\mu_k(U_b^a)=h_b(k)$ for all $k \in K$. In particular, $\mu_x(U_a^b)=\nu_x(b)$ for all $x \in X$ and all $b$, so these sets witness the fact that $(\Clopen(C),(\mu_x)_{x \in X})$ satisfies the Fra\"iss\'e property.

\eqref{p:divisiblesimplex} Since $S(K)$ is divisible, it is immediate that $S(X)$ is also divisible. It follows from Proposition \ref{p:propsG(X)} that the age of $(\Clopen(X),(\mu_x)_{x \in X})$ is equal to $\mcL_X$. Hence \cite{Melleray2018}*{Proposition~3.9} ensures that $S(X)$ is a dynamical simplex.

\eqref{p:reduction} Assume that $S(X)$ and $S(X')$ are isomorphic. Then their extreme boundaries are homeomorphic, that is, $X$ and $X'$ are homeomorphic. 
Conversely, let $\varphi \colon X \to X'$ be a homeomorphism. Then $(\Clopen(C),(\mu_{\varphi(x)})_{x \in X})$ is an ultrahomogeneous $X$-structure, with the same age as the ultrahomogeneous $X$-structure $(\Clopen(C),(\mu_x)_{x \in X})$. Thus there exists an automorphism $h$ of $\Clopen(C)$, equivalently a homeomorphism $h$ of $C$, such that $h_*\mu_x= \mu_{\varphi(x)}$ for all $x \in X$. We then have $h_*S(X)=S(X')$.
\end{proof}

We are finally done.

\begin{theorem}
The following equivalence relations are Borel bireducible.
\begin{enumerate}
\item\label{OE} Orbit equivalence of minimal homeomorphisms;
\item\label{OE-T} Orbit equivalence of Toeplitz subshifts;
\item\label{Iso} Isomorphism of dynamical simplices;
\item\label{Iso-D} Isomorphism of divisible dynamical simplices;
\item\label{Homeo} Homeomorphism of closed subsets of the Cantor space.
\end{enumerate}
\end{theorem}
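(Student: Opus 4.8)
The plan is to establish the cycle of Borel reductions
$$(\ref{OE})\ \le_B\ (\ref{Iso})\ \le_B\ (\ref{Homeo})\ \le_B\ (\ref{Iso-D})\ \le_B\ (\ref{OE-T})\ \le_B\ (\ref{OE})\ ,$$
after which all five equivalence relations are pairwise Borel bireducible (reading the cycle around in both directions). Two of the links are already at hand. The reduction of \eqref{OE} to \eqref{Iso} --- in fact a bireduction --- is the proposition proved above asserting that orbit equivalence of minimal homeomorphisms and isomorphism of dynamical simplices are Borel bireducible, obtained from the Borel map $\varphi\mapsto K_\varphi$ and the Giordano--Putnam--Skau theorem. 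The reduction of \eqref{Homeo} to \eqref{Iso-D} is the proposition established just above: $X\mapsto S(X)$ is continuous, hence Borel, from the closed subsets of a Cantor space into $\mcF(\Prob(C))$, its image consists of divisible dynamical simplices by part~\eqref{p:divisiblesimplex}, and $S(X)$ and $S(X')$ are isomorphic precisely when $X$ and $X'$ are homeomorphic by part~\eqref{p:reduction}. Lastly, the reduction of \eqref{OE-T} to \eqref{OE} is, once the domains are pinned down, an inclusion: a Toeplitz subshift is a minimal homeomorphism, and orbit equivalence of Toeplitz subshifts is realized as the restriction of orbit equivalence to a standard Borel space of $\{0,1\}$-Toeplitz subshifts transported into $\Homeo(X)$, for a fixed Cantor space $X$, via a Borel choice of conjugating homeomorphisms.

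The link \eqref{Iso} to \eqref{Homeo} runs through $S_\infty$. Isomorphism of dynamical simplices is the orbit equivalence relation of the continuous action $g\cdot F=g_*F$ of $\Homeo(X)$ on $\mcF(\Prob(X))$, restricted to the Borel invariant set $Dyn(X)$; it therefore Borel reduces to the universal relation $E_{\Homeo(X)}$ induced by a Borel action of $\Homeo(X)$. Since $\Homeo(X)$ embeds as a closed subgroup of $S_\infty$ --- it acts faithfully by permutations of the countable set $\Clopen(X)$ --- the relation $E_{\Homeo(X)}$ Borel reduces to $E_{S_\infty}$, and $E_{S_\infty}$ is Borel bireducible to homeomorphism of closed subsets of the Cantor space by the Camerlo--Gao theorem. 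Composing these reductions yields \eqref{Iso} $\le_B$ \eqref{Homeo}.

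The link \eqref{Iso-D} to \eqref{OE-T} is where Theorem~\ref{t:Toeplitz} is used. The point is that the construction proving Theorem~\ref{t:Toeplitz} is uniform in $K$: starting from the fixed partition $\mcT_0=\{A,B\}$, at each stage one passes to a refining KR-partition enjoying the properties guaranteed by Lemmas~\ref{l:refinement}, \ref{prop:towerdiameter} and \ref{prop:towerrefinement} --- and, at the stages dictated by the dense sequence $\mu_n(K)$, compatible with the relevant clopen sets --- by selecting the first admissible refinement in a fixed enumeration, admissibility being a Borel condition in $K$. This produces a Borel map $K\mapsto\varphi_K$ from the Borel space of divisible dynamical simplices into $\Homeo(X)$, with each $\varphi_K$ isomorphic to a $\{0,1\}$-Toeplitz subshift and $K_{\varphi_K}=K$. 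By the Giordano--Putnam--Skau theorem, $\varphi_K$ and $\varphi_L$ are orbit equivalent if and only if $K=K_{\varphi_K}$ and $L=K_{\varphi_L}$ are isomorphic dynamical simplices, so $K\mapsto\varphi_K$ is the desired reduction.

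Beyond the results of the previous sections, the argument is essentially bookkeeping, and this is also where the main difficulty lies. For the two constructive links one must organize the inductive constructions --- the cutting-and-stacking of Theorem~\ref{t:Toeplitz} and the Fra\"iss\'e-theoretic construction behind $X\mapsto S(X)$ --- so that at each stage the admissible moves form a Borel set of the input and a measurable selection exists, so that the whole construction assembles into a Borel map; and one must confirm that the ambient parameter spaces are standard Borel. That $Dyn(X)$ and its divisible part are Borel was done above, and that the $\{0,1\}$-Toeplitz subshifts form a Borel subset of $\mcF(\{0,1\}^\Z)$ which can be conjugated Borel-measurably onto homeomorphisms of a fixed Cantor space is a routine verification, parallel to the Borelness lemmas already proved. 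With the five reductions in place, the cycle closes and the theorem follows.
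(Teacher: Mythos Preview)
Your proof is correct and follows essentially the same route as the paper: both establish the identical cycle of reductions \eqref{Iso-D} $\le_B$ \eqref{OE-T} $\le_B$ \eqref{OE} $\le_B$ \eqref{Iso} $\le_B$ \eqref{Homeo} $\le_B$ \eqref{Iso-D}, you merely enter the cycle at a different vertex. Your exposition is somewhat more explicit about the Borel bookkeeping (the selection arguments behind $K\mapsto\varphi_K$ and the transport of Toeplitz subshifts into $\Homeo(X)$), which the paper largely leaves implicit, but the substance is the same.
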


\begin{proof}
We already know that \eqref{OE} and \eqref{Iso} are Borel bireducible. A Borel coding of the construction used in the proof of \ref{t:Toeplitz} (which, as we already mentioned, is easy to obtain) produces a Borel reduction of \eqref{Iso-D} to \eqref{OE-T}. Clearly \eqref{OE-T} Borel reduces to \eqref{OE}. Since \eqref{Iso} is induced by a Borel action of a closed subgroup of $S_\infty$, it must Borel reduce to \eqref{Homeo} by the theorem of Camerlo and Gao mentioned in the introduction. Finally, the map $X \mapsto S(X)$ yields a continuous reduction of \eqref{Homeo} to \eqref{Iso-D}. We thus established the existence of the following Borel reductions:
$$\eqref{Iso-D} \preceq \eqref{OE-T} \preceq \eqref{OE} \preceq \eqref{Iso} \preceq \eqref{Homeo} \preceq \eqref{Iso-D} \ .$$
\end{proof}

We again note without further details that strong orbit equivalence (of minimal homeomorphisms, or Toeplitz subshifts) also sits at the same complexity level.


\bibliography{mybiblio}

\end{document}